\documentclass[11pt]{amsart}
\usepackage{amssymb}
\usepackage{mathrsfs}
\usepackage{enumerate}
\makeatletter
\@namedef{subjclassname@2010}{%
  \textup{2010} Mathematics Subject Classification}
\makeatother
\newtheorem{lemma}{Lemma}
\newtheorem{theorem}{Theorem}[section]
\newtheorem{pro}{Proposition}
\newtheorem{corollary}{Corollary}
\newtheorem{defi}{Definition}
\newtheorem{remark}{Remark}

\begin{document}

\title{WEIGHTED RIESZ BASES IN G-FUSION FRAMES AND THEIR PERTURBATION}

\author[Gh. Rahimlou]{GHOLAMREZA RAHIMLOU}
\address{Department of Mathematics, Faculty of Tabriz  Branch\\ Technical and Vocational University (TUV), East Azarbaijan
, Iran\\}
\email{grahimlou@gmail.com}

\author[V. Sadri]{Vahid Sadri}
\address{Department of Mathematics, Faculty of Tabriz  Branch\\ Technical and Vocational University (TUV), East Azarbaijan
, Iran}
\email{vahidsadri57@gmail.com}

\author[R. Ahmadi]{Reza Ahmadi}
\address{Institute of Fundamental Sciences\\University of Tabriz\\, Iran\\}
\email{rahmadi@tabrizu.ac.ir}


\begin{abstract}
In this paper, we introduce orthonoramal and Riesz bases for g-fusion frames and will show that the weights have basic roles. Next, we prove an effective theorem between frames and g-fusion frames by using an operator. Finally, perturbations of g-fusion frames will be presented.
\end{abstract}

\subjclass[2010]{Primary 42C15; Secondary 46C99, 41A58}

\keywords{g-fusion frame, Dual  g-fusion frame, gf-complete, gf-orthonormal basis, gf-Riesz basis.}

\maketitle

\section{Introduction}
Bases play a prominent role in discrete frames and their studying can extract interesting properties from the frames. One of the most important types of bases are  orthonormal  bases and also, as their special case, the Riesz basis. The Riesz basis has been defined in \cite{ch} by the image of  orthonormal bases with a bounded bijective operator. Afterwards, this basis has been equalized in a property with complete sequences and  inequalities for the synthesis operator. Sun in \cite{sun} could introduce a Riesz basis for g-frames by using that property and we will continue his method in Section 3 for g-fusion frames. In Section 4, we present a useful operator for characterizations of these frames and finally in Section 5, a perturbation of these frames will be studied.

\section{Preliminaries}
Throughout this paper, $H$ and $K$ are separable Hilbert spaces and $\mathcal{B}(H,K)$ is the collection of all bounded linear operators of $H$ into $K$. If $K=H$, then $\mathcal{B}(H,H)$ will be denoted by $\mathcal{B}(H)$. Also, $\pi_{V}$ is the orthogonal projection from $H$ onto a closed subspace $V\subset H$ and  $\lbrace H_j\rbrace_{j\in\Bbb J}$ is a sequence of Hilbert spaces, where $\Bbb J$ is a subset of $\Bbb Z$. It is easy to check that if $u\in\mathcal{B}(H)$ is an invertible operator, then 
$$\pi_{uV}u\pi_{V}=u\pi_{V}.$$
 We define the space $\mathscr{H}_2:=(\sum_{j\in\Bbb J}\oplus H_j)_{\ell_2}$ by
\begin{eqnarray}
\mathscr{H}_2=\big\lbrace \lbrace f_j\rbrace_{j\in\Bbb J} \ : \ f_j\in H_j , \ \sum_{j\in\Bbb J}\Vert f_j\Vert^2<\infty\big\rbrace,
\end{eqnarray}
with the inner product defined by
$$\langle \lbrace f_j\rbrace, \lbrace g_j\rbrace\rangle=\sum_{j\in\Bbb J}\langle f_j, g_j\rangle.$$
It is clear that $\mathscr{H}_2$ is a Hilbert space with pointwise operations.
\begin{defi}
Let $W=\lbrace W_j\rbrace_{j\in\Bbb J}$ be a collection of closed subspaces of $H$, $\lbrace v_j\rbrace_{j\in\Bbb J}$ be a family of weights, i.e. $v_j>0$  and $\Lambda_j\in\mathcal{B}(H,H_j)$ for each $j\in\Bbb J$. We say $\Lambda:=(W_j, \Lambda_j, v_j)$ is a generalized fusion frame (or g-fusion frame) for $H$ if there exists $0<A\leq B<\infty$ such that for each $f\in H$
\begin{eqnarray}\label{g}
A\Vert f\Vert^2\leq\sum_{j\in\Bbb J}v_j^2\Vert \Lambda_j \pi_{W_j}f\Vert^2\leq B\Vert f\Vert^2.
\end{eqnarray} 
\end{defi}
We call $\Lambda$ a Parseval g-fusion frame if $A=B=1$. When the right hand side of (\ref{g}) holds, $\Lambda$ is called a g-fusion Bessel sequence for $H$ with bound $B$. Throughout this paper, $\Lambda$ will be a triple $(W_j, \Lambda_j, v_j)$ with $j\in\Bbb J$ unless otherwise noted.

The synthesis and analysis operators in the g-fusion frames are defined by
\begin{align*}
T_{\Lambda}&:\mathscr{H}_2\longrightarrow H \ \ \ \ \ \ \ \ \ , \ \ \ \ \ \ T_{\Lambda}^*:H\longrightarrow\mathscr{H}_2\\
T_{\Lambda}(\lbrace f_j\rbrace_{j\in\Bbb J})&=\sum_{j\in\Bbb J}v_j \pi_{W_j}\Lambda_{j}^{*}f_j \ \ \ , \ \ \ \ T_{\Lambda}^*(f)=\lbrace v_j \Lambda_j \pi_{W_j}f\rbrace_{j\in\Bbb J}.
\end{align*}
Thus, the g-fusion frame operator is given by
$$S_{\Lambda}f=T_{\Lambda}T^*_{\Lambda}f=\sum_{j\in\Bbb J}v_j^2 \pi_{W_j}\Lambda^*_j \Lambda_j \pi_{W_j}f$$
and
\begin{equation}\label{sf1}
\langle S_{\Lambda}f, f\rangle=\sum_{j\in\Bbb J}v_j^2\Vert \Lambda_j \pi_{W_j}f\Vert^2.
\end{equation}
Therefore
$$A I\leq S_{\Lambda}\leq B I.$$
This means that $S_{\Lambda}$ is a bounded, positive and invertible operator (with adjoint inverse). So, we have the reconstruction formula for any $f\in H$:
\begin{equation}\label{3}
f=\sum_{j\in\Bbb J}v_j^2 \pi_{W_j}\Lambda^*_j \Lambda_j \pi_{W_j}S^{-1}_{\Lambda}f
=\sum_{j\in\Bbb J}v_j^2 S^{-1}_{\Lambda}\pi_{W_j}\Lambda^*_j \Lambda_j \pi_{W_j}f.
\end{equation}
For the proof of followings, see \cite{sad}.
\begin{theorem}\label{2.2}
$\Lambda$ is a g-fusion Bessel sequence for $H$ with bound $B$ if and only if the operator $T_{\Lambda}$ is  well-defined and bounded operator with $\Vert T_{\lambda}\Vert\leq \sqrt{B}$.
\end{theorem}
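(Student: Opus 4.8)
The plan is to prove the two implications separately, with the forward direction (Bessel $\Rightarrow$ $T_{\Lambda}$ bounded) carrying the real work. First I would assume $\Lambda$ is a g-fusion Bessel sequence with bound $B$ and estimate, for an arbitrary finite subset $\Bbb J_0\subseteq\Bbb J$ and $\{f_j\}_{j\in\Bbb J}\in\mathscr{H}_2$, the partial sum $\sum_{j\in\Bbb J_0}v_j\pi_{W_j}\Lambda_j^*f_j$. The device is duality: for $g\in H$ with $\Vert g\Vert=1$, using that $\pi_{W_j}$ is self-adjoint and idempotent,
\begin{equation*}
\Big\langle \sum_{j\in\Bbb J_0} v_j \pi_{W_j}\Lambda_j^* f_j,\, g\Big\rangle = \sum_{j\in\Bbb J_0} \langle f_j,\, v_j \Lambda_j \pi_{W_j} g\rangle,
\end{equation*}
and then Cauchy--Schwarz bounds the right-hand side by $\big(\sum_{j\in\Bbb J_0}\Vert f_j\Vert^2\big)^{1/2}\big(\sum_{j\in\Bbb J_0} v_j^2\Vert\Lambda_j\pi_{W_j}g\Vert^2\big)^{1/2}$. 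By (\ref{g}) the second factor is at most $\sqrt{B}\,\Vert g\Vert=\sqrt{B}$, so taking the supremum over such $g$ yields $\Vert\sum_{j\in\Bbb J_0} v_j \pi_{W_j}\Lambda_j^* f_j\Vert \leq \sqrt{B}\big(\sum_{j\in\Bbb J_0}\Vert f_j\Vert^2\big)^{1/2}$.

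From this one inequality I would extract well-definedness by a Cauchy-net argument: applying it with $\Bbb J_0=\Bbb J_2\setminus\Bbb J_1$ for nested finite sets shows that the net of partial sums of $\sum_{j\in\Bbb J} v_j\pi_{W_j}\Lambda_j^* f_j$ is Cauchy in $H$, since $\sum_{j\in\Bbb J_2\setminus\Bbb J_1}\Vert f_j\Vert^2$ is a tail of a convergent series. Hence the series converges (unconditionally), so $T_{\Lambda}$ is a well-defined linear map on $\mathscr{H}_2$, and letting $\Bbb J_0\uparrow\Bbb J$ in the estimate gives $\Vert T_{\Lambda}(\{f_j\})\Vert\leq\sqrt{B}\,\Vert\{f_j\}\Vert_{\mathscr{H}_2}$, i.e.\ $T_{\Lambda}\in\mathcal{B}(\mathscr{H}_2,H)$ with $\Vert T_{\Lambda}\Vert\leq\sqrt{B}$.

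For the converse, assume $T_{\Lambda}$ is well-defined and bounded with $\Vert T_{\Lambda}\Vert\leq\sqrt{B}$. Then its adjoint exists with equal norm, and a one-line computation against an arbitrary $\{g_j\}\in\mathscr{H}_2$, namely $\langle T_{\Lambda}^* f,\{g_j\}\rangle=\langle f, T_{\Lambda}\{g_j\}\rangle=\sum_{j\in\Bbb J} v_j\langle\Lambda_j\pi_{W_j}f,g_j\rangle$, identifies it with the stated formula $T_{\Lambda}^* f=\{v_j\Lambda_j\pi_{W_j}f\}_{j\in\Bbb J}$. Therefore, using (\ref{sf1}), $\sum_{j\in\Bbb J} v_j^2\Vert\Lambda_j\pi_{W_j}f\Vert^2=\Vert T_{\Lambda}^* f\Vert^2\leq\Vert T_{\Lambda}\Vert^2\Vert f\Vert^2\leq B\Vert f\Vert^2$, so $\Lambda$ is a g-fusion Bessel sequence with bound $B$. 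I expect the main obstacle to be the delicate point in the forward direction that convergence of the defining series must be secured before one is entitled to speak of $\Vert T_{\Lambda}\Vert$ at all; the duality-plus-Cauchy-net argument is precisely what resolves this, and as a bonus it delivers the sharp constant $\sqrt{B}$ rather than a weaker bound.
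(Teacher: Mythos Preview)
Your argument is correct in both directions. The forward implication uses the standard duality trick: bound the partial sums by pairing against unit vectors, apply Cauchy--Schwarz, invoke the Bessel inequality~(\ref{g}), and then use a Cauchy-net argument to deduce unconditional convergence of the defining series together with the norm bound. The converse is the routine identification of $T_{\Lambda}^*$ followed by $\Vert T_{\Lambda}^*\Vert=\Vert T_{\Lambda}\Vert\leq\sqrt{B}$. All steps are valid and the constant $\sqrt{B}$ is sharp.

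As for comparison with the paper: there is nothing to compare against. The paper does not supply its own proof of this theorem; it simply states the result and refers the reader to~\cite{sad} with the sentence ``For the proof of followings, see~\cite{sad}.'' Your proof is a complete and standard one (essentially the same argument used for ordinary frames and for $g$-frames in~\cite{ch,sun}), and it would serve perfectly well in place of the citation.
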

\begin{theorem}\label{2.3}
$\Lambda$ is a g-fusion frame for $H$ if and only if 
 \begin{align*}
T_{\Lambda}&:\mathscr{H}_2\longrightarrow H,\\
T_{\Lambda}(\lbrace f_j\rbrace_{j\in\Bbb J})&=\sum_{j\in\Bbb J}v_j \pi_{W_j}\Lambda_{j}^{*}f_j
\end{align*}
is a well-defined, bounded and surjective.
\end{theorem}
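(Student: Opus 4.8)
The plan is to prove the two implications separately, using Theorem \ref{2.2} to dispose of well-definedness and boundedness and reserving the real work for the frame inequalities.

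For the forward implication, suppose $\Lambda$ is a g-fusion frame with bounds $A,B$. The right-hand inequality in (\ref{g}) says exactly that $\Lambda$ is a g-fusion Bessel sequence with bound $B$, so Theorem \ref{2.2} immediately gives that $T_{\Lambda}$ is well-defined and bounded with $\Vert T_{\Lambda}\Vert\le\sqrt{B}$. For surjectivity I would exploit the factorization $S_{\Lambda}=T_{\Lambda}T_{\Lambda}^{*}$: from $AI\le S_{\Lambda}\le BI$ the operator $S_{\Lambda}$ is invertible, hence onto, and since $S_{\Lambda}h=T_{\Lambda}(T_{\Lambda}^{*}h)$ for every $h$ we get $H=\operatorname{ran}S_{\Lambda}\subseteq\operatorname{ran}T_{\Lambda}$, i.e. $T_{\Lambda}$ is surjective.

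For the converse, assume $T_{\Lambda}$ is well-defined, bounded and surjective. Boundedness and Theorem \ref{2.2} yield a constant $B$ with $\sum_{j\in\Bbb J}v_j^2\Vert\Lambda_j\pi_{W_j}f\Vert^2\le B\Vert f\Vert^2$ for all $f$, which is the upper frame bound. For the lower bound I would use the classical fact that a bounded surjective operator between Hilbert spaces has an adjoint that is bounded below: by the open mapping theorem there is $c>0$ such that $c$ times the closed unit ball of $H$ is contained in the image under $T_{\Lambda}$ of the closed unit ball of $\mathscr{H}_2$, whence for every $f\in H$
\[
\Vert T_{\Lambda}^{*}f\Vert=\sup_{\Vert g\Vert_{\mathscr{H}_2}\le 1}\lvert\langle f,T_{\Lambda}g\rangle\rvert\ \ge\ \sup_{\Vert h\Vert\le c}\lvert\langle f,h\rangle\rvert=c\Vert f\Vert .
\]
Squaring and using the formula $\Vert T_{\Lambda}^{*}f\Vert^2=\sum_{j\in\Bbb J}v_j^2\Vert\Lambda_j\pi_{W_j}f\Vert^2$ recorded above gives the left-hand inequality in (\ref{g}) with $A=c^2$, and combining the two bounds shows $\Lambda$ is a g-fusion frame.

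The routine parts are the invocations of Theorem \ref{2.2} and the identification of $\Vert T_{\Lambda}^{*}f\Vert^2$ with the middle term of (\ref{g}). The step that deserves care — and the conceptual heart of the statement — is the equivalence ``$T_{\Lambda}$ surjective $\iff$ $T_{\Lambda}^{*}$ bounded below''; I would either cite it as a standard operator-theoretic lemma or include the short argument above together with its reverse direction (if $T_{\Lambda}^{*}$ is bounded below it is injective with closed range, so $\operatorname{ran}T_{\Lambda}$ is both closed and dense in $H$, hence equal to $H$). This is also the only place where completeness of $\mathscr{H}_2$ and $H$ is genuinely needed.
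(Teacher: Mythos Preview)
Your argument is correct and is the standard route for this result: Theorem~\ref{2.2} handles well-definedness and the upper bound, the factorization $S_{\Lambda}=T_{\Lambda}T_{\Lambda}^{*}$ together with $AI\le S_{\Lambda}$ gives surjectivity, and the equivalence ``$T_{\Lambda}$ surjective $\Leftrightarrow$ $T_{\Lambda}^{*}$ bounded below'' via the open mapping theorem yields the lower frame bound. Note, however, that the paper itself does not supply a proof of this theorem; it is stated in the preliminaries with the proof deferred to the reference \cite{sad}, so there is no in-paper argument to compare against. Your proof is exactly the one a reader would expect to find in that reference (and is essentially the same as the classical proof for ordinary frames and for g-frames in \cite{sun}), so nothing needs to change.
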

\begin{defi}
A g-fusion frame $\tilde{\Lambda}:=(S^{-1}_{\Lambda}W_j, \Lambda_j\pi_{W_j} S_{\Lambda}^{-1}, v_j)$ with g-fusion frame operator $S_{\tilde{\Lambda}}=T_{\tilde{\Lambda}}T^*_{\tilde{\Lambda}}$ is called the \textit{(canonical) dual g-fusion frame}  of $\Lambda$.
\end{defi}
Now, we can obtain
\begin{align}\label{frame}
f=\sum_{j\in\Bbb J}v_j^2\pi_{W_j}\Lambda^*_j\tilde{\Lambda_j}\pi_{\tilde{W_j}}f=
\sum_{j\in\Bbb J}v_j^2\pi_{\tilde{W_j}}\tilde{\Lambda_j}^*\Lambda_j\pi_{W_j}f,
\end{align}
where $\tilde{W_j}:=S^{-1}_{\Lambda}W_j  \ , \ \tilde{\Lambda_j}:=\Lambda_j \pi_{W_j}S_{\Lambda}^{-1}.$
\begin{defi}
Let $\Lambda_j\in\mathcal{B}(H,H_j)$ for each $j\in\Bbb J$.  $\Lambda=(W_j, \Lambda_j, v_j)$ is called gf-complete, if
$$\overline{\mbox{span}}\lbrace \pi_{W_j}\Lambda^*_j H_j\rbrace=H.$$
\end{defi}
It is easy to check that $\Lambda$ is gf-complete if and only if 
$$\lbrace f: \ \Lambda_j \pi_{W_j}f=0 , \ j\in\Bbb J\rbrace=\lbrace 0\rbrace.$$
\begin{pro}\label{p3}
If $\Lambda=(W_j, \Lambda_j, v_j)$ is a g-fusion frame for $H$, then $\Lambda$ is a complete.
\end{pro}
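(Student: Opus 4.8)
The plan is to read off the conclusion directly from the lower g-fusion frame bound, via the kernel reformulation of gf-completeness recorded just above: $\Lambda$ is gf-complete if and only if the only $f\in H$ satisfying $\Lambda_j\pi_{W_j}f=0$ for all $j\in\Bbb J$ is $f=0$. So it suffices to verify this kernel condition.

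First I would fix an arbitrary $f\in H$ with $\Lambda_j\pi_{W_j}f=0$ for every $j\in\Bbb J$. Then the middle sum in (\ref{g}) is identically zero, i.e. $\sum_{j\in\Bbb J}v_j^2\Vert\Lambda_j\pi_{W_j}f\Vert^2=0$. Substituting this into the left-hand inequality of (\ref{g}) yields $A\Vert f\Vert^2\leq 0$, and since $A>0$ this forces $\Vert f\Vert=0$, hence $f=0$. Thus the kernel set is $\{0\}$, and by the stated equivalence $\Lambda$ is gf-complete, which is exactly the assertion that $\Lambda$ is complete.

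The only point requiring a word of care is the passage between the two descriptions of gf-completeness, namely $\overline{\operatorname{span}}\{\pi_{W_j}\Lambda_j^*H_j\}=H$ versus triviality of the common kernel of the $\Lambda_j\pi_{W_j}$. This is precisely the elementary fact noted right after the definition of gf-complete: for $g\in H_j$ one has $\langle \pi_{W_j}\Lambda_j^*g,\,f\rangle=\langle g,\,\Lambda_j\pi_{W_j}f\rangle$, so $f$ is orthogonal to $\pi_{W_j}\Lambda_j^*H_j$ for all $j$ exactly when $\Lambda_j\pi_{W_j}f=0$ for all $j$; density of the span is then equivalent to triviality of that common kernel. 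Beyond invoking this, there is essentially no obstacle — the proposition is an immediate consequence of the strict positivity of the lower frame bound $A$.
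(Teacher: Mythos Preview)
Your argument is correct: the lower frame inequality $A\Vert f\Vert^2\leq\sum_j v_j^2\Vert\Lambda_j\pi_{W_j}f\Vert^2$ with $A>0$ immediately forces the common kernel of the $\Lambda_j\pi_{W_j}$ to be trivial, and the equivalence with gf-completeness is exactly the observation recorded after Definition~3. The paper itself does not give a proof here but simply refers the reader to \cite{sad}, so there is no in-paper argument to compare against; your direct derivation from the lower bound is the standard route and is precisely what one would expect that reference to contain.
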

\begin{proof}
See \cite{sad}.
\end{proof}
\section{gf-Riesz and Orthonormal Bases}
\begin{defi}
 Let $W=\lbrace W_j\rbrace_{j\in\Bbb J}$ be a collection of closed subspaces of $H$ and $j\in\Bbb J$.
We say that $(W_j, \Lambda_j)$ is a gf-orthonormal bases for $H$ with respect to $\{v_j\}_{j\in\Bbb J}$, if 
\begin{align}\label{o1}
\langle v_i\pi_{W_i}\Lambda^*_i g_i, v_j\pi_{W_j}\Lambda^*_{j}g_j\rangle=\delta_{i,j}\langle g_i, g_j\rangle \ , \ \ \ i,j\in\Bbb J\ , \ \ \ g_i\in H_i\ , \ \ \ g_j\in H_j
\end{align}
\begin{align}\label{o2}
\sum_{j\in\Bbb J}v_j^2\Vert \Lambda_{j}\pi_{W_j}f\Vert^2=\Vert f\Vert^2 \ , \ \ \ \ f\in H.
\end{align}
\end{defi}
\begin{defi}
$\Lambda=(W_j, \Lambda_j, v_j)$ is called a gf-Riesz basis for $H$ if 
\begin{enumerate}
\item $\Lambda$ is  gf-complete,
\item There exist $0<A\leq B<\infty$ such that for each finite subset $\Bbb I\subseteq\Bbb J$ and $g_j\in H_j$, $j\in\Bbb I$,
\begin{equation}
A\sum_{j\in\Bbb I}\Vert g_j\Vert^2\leq\big\Vert\sum_{j\in\Bbb I}v_j\pi_{W_j}\Lambda^*_j g_j \big\Vert^2\leq B\sum_{j\in\Bbb I}\Vert g_j\Vert^2.
\end{equation}
\end{enumerate}
\end{defi}
It is easy to check that if $\Lambda$ is a gf-Riesz bases for $H$, then the operator $T_{\Lambda}$ which is defined by
\begin{align*}
T_{\Lambda}&:\mathscr{H}_2\longrightarrow H\\
T_{\Lambda}(\lbrace g_j\rbrace_{j\in\Bbb J})&=\sum_{j\in\Bbb J}v_j \pi_{W_j}\Lambda_{j}^{*}g_j,
\end{align*}
is injetive.
\begin{pro}
Let $\Lambda=(W_j, \Lambda_j, v_j)$ be a g-fusion frame for $H$ and suppose that (\ref{o1}) holds. Then $\Lambda$ is a gf-orthonormal basis for $H$.
\end{pro}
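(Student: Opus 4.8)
The goal is to verify condition (\ref{o2}), since (\ref{o1}) is assumed; the two conditions together are exactly the definition of a gf-orthonormal basis. The strategy is to show that the synthesis operator $T_{\Lambda}$ is unitary, so that the g-fusion frame operator $S_{\Lambda}=T_{\Lambda}T_{\Lambda}^{*}$ equals the identity.

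First I would record what the hypothesis gives on finitely supported sequences. For a finite subset $\Bbb I\subseteq\Bbb J$ and $g_j\in H_j$ ($j\in\Bbb I$), expanding the norm and applying (\ref{o1}) term by term gives
\[
\Bigl\Vert\sum_{j\in\Bbb I}v_j\pi_{W_j}\Lambda_j^{*}g_j\Bigr\Vert^{2}
=\sum_{i,j\in\Bbb I}\langle v_i\pi_{W_i}\Lambda_i^{*}g_i,\,v_j\pi_{W_j}\Lambda_j^{*}g_j\rangle
=\sum_{j\in\Bbb I}\Vert g_j\Vert^{2},
\]
so $T_{\Lambda}$ is norm preserving on the subspace of $\mathscr{H}_2$ consisting of finitely supported sequences, which is dense in $\mathscr{H}_2$.

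Next, since $\Lambda$ is a g-fusion frame, Theorem \ref{2.3} guarantees that $T_{\Lambda}$ is a well-defined, bounded and surjective operator on all of $\mathscr{H}_2$. Because $T_{\Lambda}$ is continuous and coincides with an isometry on a dense subspace, the map $h\mapsto\Vert T_{\Lambda}h\Vert^{2}-\Vert h\Vert^{2}$ is continuous and vanishes on a dense set, hence vanishes identically; thus $T_{\Lambda}$ is an isometry on $\mathscr{H}_2$. An isometry that is also surjective is unitary, so $T_{\Lambda}T_{\Lambda}^{*}=I_{H}$, i.e. $S_{\Lambda}=I_{H}$.

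Finally, combining $S_{\Lambda}=I_{H}$ with (\ref{sf1}) yields, for every $f\in H$,
\[
\sum_{j\in\Bbb J}v_j^{2}\Vert\Lambda_j\pi_{W_j}f\Vert^{2}=\langle S_{\Lambda}f,f\rangle=\Vert f\Vert^{2},
\]
which is precisely (\ref{o2}). Together with the assumed (\ref{o1}), this shows that $(W_j,\Lambda_j)$ is a gf-orthonormal basis for $H$ with respect to $\{v_j\}_{j\in\Bbb J}$. The only points needing a little care are the passage from finitely supported sequences to all of $\mathscr{H}_2$ and the availability of surjectivity; the former is the standard density/continuity argument just described and the latter is exactly the content of Theorem \ref{2.3}, so I do not anticipate any real obstacle.
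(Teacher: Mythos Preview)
Your proof is correct but follows a different route from the paper's. You show directly that condition (\ref{o1}) forces the synthesis operator $T_{\Lambda}$ to be an isometry on finitely supported sequences, extend by density, and then combine with the surjectivity supplied by Theorem~\ref{2.3} to conclude that $T_{\Lambda}$ is unitary, whence $S_{\Lambda}=T_{\Lambda}T_{\Lambda}^{*}=I_H$. The paper instead argues on the level of $S_{\Lambda}$ alone: it introduces the fixed-point set $M=\{f\in H:\ S_{\Lambda}f=f\}$, uses (\ref{o1}) to show that each vector $\pi_{W_k}\Lambda_k^{*}\Lambda_k\pi_{W_k}f$ lies in $M$, and then observes that for $h\in M^{\perp}$ these vectors are simultaneously in $M$ and orthogonal to $M$, hence zero; the lower frame inequality then forces $h=0$, so $M=H$ and $S_{\Lambda}=I_H$. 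Your argument is arguably more transparent, since it reads (\ref{o1}) exactly as the statement $T_{\Lambda}^{*}T_{\Lambda}=I_{\mathscr{H}_2}$ on a dense set and invokes surjectivity once; the paper's fixed-point approach avoids passing through $\mathscr{H}_2$ and the density step, working entirely inside $H$, but at the cost of a slightly less direct orthogonality trick.
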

\begin{proof}
Assume that $S_{\Lambda}$ is the g-fusion frame operator of $\Lambda$ and
$$M:=\lbrace f\in H\ : \ S_{\Lambda}f=f\rbrace.$$
It is clear that $M$ is a non-empty closed subspace of $H$. Let $f\in H$ and $k\in\Bbb J$. Since
$$v_j v_k\pi_{W_j}\Lambda^*_j\Lambda_j\pi_{W_j}\pi_{W_k}\Lambda^*_{k}\Lambda_k\pi_{W_k}f=\delta_{j,k}\pi_{W_k}\Lambda^*_{k}\Lambda_k\pi_{W_k}f,$$
then $\pi_{W_k}\Lambda^*_{k}\Lambda_k\pi_{W_k}f\in M$. So, for any $h\in M^{\perp}$ and $g\in M$ we have, for all $j\in\Bbb J$
$$\langle\pi_{W_j}\Lambda^*_j\Lambda_j\pi_{W_j}h, g\rangle=\langle h, \pi_{W_j}\Lambda^*_j\Lambda_j\pi_{W_j}g\rangle=0.$$
Thus $\pi_{W_j}\Lambda^*_j\Lambda_j\pi_{W_j}h=0$ for each $j\in\Bbb J$, and so $\Vert\Lambda_j\pi_{W_j}h\Vert=0$. By definition of g-fusion frame, we obtain $h=0$. Therefore, $M^{\perp}=\lbrace0\rbrace$ and we conclude $H=M$. So, $S_{\Lambda}=id_{H}$ and the proof is completed.
\end{proof}
\begin{theorem}
$\Lambda$ is a gf-orthonormal bases for $H$ if and only if
\begin{enumerate}
\item[(I)] $v_j\pi_{W_j}\Lambda^*_j$ is isometric for any $j\in\Bbb J$;
\item[(II)] $\bigoplus_{j\in\Bbb J}v_j\pi_{W_j}\Lambda^*_j(H_j)=H$.
\end{enumerate}
\end{theorem}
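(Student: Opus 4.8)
The plan is to prove both implications by unpacking the definition of a gf-orthonormal basis, namely conditions \eqref{o1} and \eqref{o2}, and translating them into the two operator-level statements (I) and (II). Throughout I will write $P_j := v_j\pi_{W_j}\Lambda_j^*\colon H_j\to H$, so that \eqref{o1} reads $\langle P_i g_i, P_j g_j\rangle = \delta_{i,j}\langle g_i,g_j\rangle$ and \eqref{o2} reads $\sum_{j}\|P_j^* f\|^2 = \|f\|^2$ for all $f\in H$.

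For the forward direction, assume $\Lambda$ is a gf-orthonormal basis. Taking $i=j$ in \eqref{o1} gives $\|P_j g_j\|^2 = \|g_j\|^2$ for all $g_j\in H_j$, which is exactly (I). For (II), I would first observe that \eqref{o1} with $i\neq j$ says the ranges $P_j(H_j)$ are pairwise orthogonal, so the sum $\bigoplus_j P_j(H_j)$ is well-defined as an internal orthogonal sum; it remains to show it is dense (equivalently, all of $H$ since one can check the sum is closed, or argue via density). Suppose $f\perp P_j(H_j)$ for every $j$; then $\langle f, P_j g_j\rangle = \langle P_j^* f, g_j\rangle = 0$ for all $g_j\in H_j$, hence $P_j^* f = 0$ for every $j$, and \eqref{o2} forces $\|f\|^2 = 0$. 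Therefore $\bigl(\bigoplus_j P_j(H_j)\bigr)^\perp = \{0\}$, giving (II).

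For the converse, assume (I) and (II). From (I), $\|P_j g_j\| = \|g_j\|$, and polarization of the isometry identity $\langle P_j g_j, P_j g_j'\rangle = \langle g_j, g_j'\rangle$ handles the $i=j$ case of \eqref{o1}; the $i\neq j$ case is immediate from (II), since distinct summands of an orthogonal direct sum are mutually orthogonal, so $\langle P_i g_i, P_j g_j\rangle = 0$. This establishes \eqref{o1}. For \eqref{o2}, fix $f\in H$; by (II) write $f = \sum_{j} P_j g_j$ for suitable $g_j\in H_j$ (with $\sum_j\|P_j g_j\|^2<\infty$), where the orthogonality of the summands gives $\|f\|^2 = \sum_j \|P_j g_j\|^2 = \sum_j\|g_j\|^2$ using (I). The remaining point is to identify $g_j$ with $P_j^* f = v_j\Lambda_j\pi_{W_j} f$: since $f - P_i g_i \perp P_i(H_i)$ (the other summands are orthogonal to $P_i(H_i)$), we get $P_i^* f = P_i^* P_i g_i = g_i$, where the last equality is $P_i^*P_i = \mathrm{id}_{H_i}$, itself a restatement of the isometry property in (I). Substituting back yields $\|f\|^2 = \sum_j \|P_j^* f\|^2 = \sum_j v_j^2\|\Lambda_j\pi_{W_j} f\|^2$, which is \eqref{o2}.

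The main obstacle I anticipate is the careful handling of the infinite orthogonal direct sum in (II): one must make sure that "$\bigoplus_j P_j(H_j) = H$" is read with the right convention (convergent $\ell_2$-sums of vectors from the orthogonal pieces, i.e. the closed span, which coincides with the image of $T_\Lambda$), and that the decomposition $f = \sum_j P_j g_j$ comes with square-summable coefficients so that all the interchanges of sum and inner product are legitimate. Once that bookkeeping is in place — and it is essentially the content of $P_j^*P_j = \mathrm{id}$ together with pairwise orthogonality of ranges — both directions reduce to the short computations sketched above. The identity $\pi_{uV}u\pi_V = u\pi_V$ recorded in the preliminaries is not needed here; the only inputs are the definitions of $T_\Lambda$, $T_\Lambda^*$, and the gf-orthonormal basis.
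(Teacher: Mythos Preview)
Your proof is correct and follows essentially the same path as the paper's: both directions unpack \eqref{o1} and \eqref{o2} through the maps $P_j = v_j\pi_{W_j}\Lambda_j^*$, and in the converse both identify the coefficients $g_j$ with $P_j^* f$ via $P_j^*P_j=\mathrm{id}_{H_j}$. The only cosmetic difference is in the forward direction for (II): the paper uses \eqref{o2} to deduce $S_\Lambda=\mathrm{id}_H$ and hence exhibits the explicit decomposition $f=\sum_j P_j(P_j^* f)$, whereas you argue by showing $\bigl(\bigoplus_j P_j(H_j)\bigr)^\perp=\{0\}$; the two arguments are equivalent.
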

\begin{proof}
Suppose that $\Lambda$ is a gf-orthonormal basis for $H$. If $j\in\Bbb J$ and $g\in H_j$, we have
$$\langle v_j\pi_{W_j}\Lambda^*_jg, v_j\pi_{W_j}\Lambda^*_jg\rangle=\delta_{j,j}\langle g, g\rangle=\langle g, g\rangle.$$
Then, $v_j\pi_{W_j}\Lambda^*_j$ is isometric for any $j\in\Bbb J$. So, $v_j\pi_{W_j}\Lambda^*_j(H_j)$ is a closed subspace of $H$. Therefore, for each $j\in\Bbb J$ and $f\in H$, we have from (\ref{o2})
\begin{align*}
\langle f, f\rangle&=\sum_{j\in\Bbb J}v_j^2\langle \Lambda_j\pi_{W_j}f, \Lambda_j\pi_{W_j}f\rangle\\
&=\langle\sum_{j\in\Bbb J}v_j^2 \pi_{W_j}\Lambda^*_j\Lambda_j\pi_{W_j}f, f\rangle.
\end{align*}
Thus, for each $f\in H$,
$$f=\sum_{j\in\Bbb J} v_j^2\pi_{W_j}\Lambda^*_j\Lambda_j\pi_{W_j}f.$$
By letting $g_j:=v_j\Lambda_j\pi_{W_j}f$, we obtain 
$f=\sum_{j\in\Bbb J}v_j \pi_{W_j}\Lambda^*_jg_j$ and
$$\sum_{j\in\Bbb J}v_j^2\Vert\pi_{W_j}\Lambda^*_jg_j\Vert^2=\sum_{j\in\Bbb J}\Vert g_j\Vert^2=\sum_{j\in\Bbb J}v_j^2\Vert\Lambda_j\pi_{W_j}f\Vert^2=\Vert f\Vert^2.$$
So, $\bigoplus_{j\in\Bbb J}v_j\pi_{W_j}\Lambda^*_j(H_j)=H$. Conversely, if (I) , (II) are satisfied, then (\ref{o1}) is clear. Indeed, for each $i\neq j$, $v_i\pi_{W_i}\Lambda^*_i\perp v_j\pi_{W_j}\Lambda_j$ and $v_j\pi_{W_j}\Lambda^*_j$ is isometric. Let $f\in H$, we get from (II) for any $j\in\Bbb J$ and some $g_j\in H_j$
$$f=\sum_{j\in\Bbb J}v_j\pi_{W_j}\Lambda^*_jg_j$$  
and
$$\Vert f\Vert^2=\sum_{j\in\Bbb J}v_j^2\Vert\pi_{W_j}\Lambda^*_jg_j\Vert^2=\sum_{j\in\Bbb J}\Vert g_j\Vert^2.$$
Now, let $i\in\Bbb J$, then for each $f,h\in H_j$,
\begin{align*}
\langle v_j\Lambda_i\pi_{W_i}f, h\rangle&=\langle\sum_{j\in\Bbb J}v_j^2\Lambda_i\pi_{W_i}\pi_{W_j}\Lambda^*_jg_j, h\rangle\\
&=\sum_{j\in\Bbb J}v_j^2\langle\pi_{W_j}\Lambda^*_jg_j, \pi_{W_i}\Lambda^*_ih\rangle\\
&=\langle v_i\pi_{W_i}\Lambda^*_i g_i, v_i\pi_{W_i}\Lambda^*_ih\rangle\\
&=\langle g_i, h\rangle.
\end{align*}
Hence $g_i=v_i\Lambda_i\pi_{W_i}f$ for each $i\in\Bbb J$. So, $f=\sum_{j\in\Bbb J}v_j^2\pi_{W_j}\Lambda^*_j\Lambda_j\pi_{W_j}f$ for all $f\in H$ and  (\ref{o2}) is proved.
\end{proof}
\begin{corollary}\label{c1}
Every gf-orthonormal basss for $H$ is a gf-Riesz bases for $H$ with bounds $A=B=1$.
\end{corollary}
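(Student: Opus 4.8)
The plan is to verify directly the two conditions in the definition of a gf-Riesz basis, the key observation being that the orthogonality relation (\ref{o1}) turns the synthesis estimate into an exact equality, which then forces $A=B=1$.

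First I would check gf-completeness. By (\ref{o2}) we have $\sum_{j\in\Bbb J}v_j^2\Vert\Lambda_j\pi_{W_j}f\Vert^2=\Vert f\Vert^2$ for all $f\in H$, so $\Lambda$ is in particular a (Parseval) g-fusion frame, and Proposition \ref{p3} yields that it is complete; equivalently, if $\Lambda_j\pi_{W_j}f=0$ for every $j\in\Bbb J$ then $\Vert f\Vert^2=0$, i.e. $f=0$, which is exactly the stated characterization of gf-completeness.

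Next I would establish the two-sided Riesz inequality. Fix a finite set $\Bbb I\subseteq\Bbb J$ and vectors $g_j\in H_j$, $j\in\Bbb I$. Expanding the squared norm and applying (\ref{o1}) term by term,
\begin{align*}
\Big\Vert\sum_{j\in\Bbb I}v_j\pi_{W_j}\Lambda^*_jg_j\Big\Vert^2
&=\sum_{i,j\in\Bbb I}\big\langle v_i\pi_{W_i}\Lambda^*_ig_i,\, v_j\pi_{W_j}\Lambda^*_jg_j\big\rangle\\
&=\sum_{i,j\in\Bbb I}\delta_{i,j}\langle g_i,g_j\rangle=\sum_{j\in\Bbb I}\Vert g_j\Vert^2 .
\end{align*}
Hence both inequalities in condition (2) of the definition hold with $A=B=1$, and together with gf-completeness this shows that $\Lambda$ is a gf-Riesz basis for $H$ with bounds $A=B=1$.

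I do not expect any genuine obstacle here: the sum over $\Bbb I$ is finite, so no convergence issue arises, and the only structural input is that the cross terms ($i\neq j$) vanish by (\ref{o1}) while the diagonal terms reproduce $\Vert g_j\Vert^2$. The one mild point worth recording explicitly is that (\ref{o1}) and (\ref{o2}) already make $\Lambda$ a g-fusion frame, so invoking Proposition \ref{p3} for the completeness part is legitimate.
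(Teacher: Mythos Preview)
Your argument is correct and is precisely the direct verification the paper has in mind: the corollary is stated in the paper without proof, as an immediate consequence of (\ref{o1}) and (\ref{o2}), and your computation---vanishing of the off-diagonal terms via (\ref{o1}) to get the exact synthesis identity, together with gf-completeness from (\ref{o2})---is exactly the implicit justification.
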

\begin{theorem}
Let $\Theta=(W_j, \Theta_j)$ be a gf-orthonormal bases with respect to $\{v_j\}_{j\in\Bbb J}$ and $\Lambda=(W_j, \Lambda_j, v_j)$ be a g-fusion frame for $H$ with same weights. Then, there exists a operator $V\in\mathcal{B}(H)$ and surjective such that $\Lambda_j\pi_{W_j}=\Theta_j\pi_{W_j}V^*$ for all $j\in\Bbb J$.
\end{theorem}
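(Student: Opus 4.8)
The plan is to realize $\Lambda$ as the image of $\Theta$ under a bounded surjection, by factoring the synthesis operators — this is exactly the fusion-analogue of the classical fact that any frame is the image of an orthonormal basis under a bounded surjective operator, which is how Riesz bases were originally introduced.

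First I would record the properties of the synthesis operator $T_{\Theta}$. By (\ref{o2}), $\Theta$ is a Parseval g-fusion frame for $H$, so Theorem~\ref{2.3} gives that $T_{\Theta}:\mathscr{H}_2\to H$, $T_{\Theta}(\{g_j\})=\sum_{j}v_j\pi_{W_j}\Theta_j^*g_j$, is well-defined, bounded and surjective, with $\|T_{\Theta}\|\le 1$ by Theorem~\ref{2.2}. On the other hand, by Corollary~\ref{c1}, $\Theta$ is a gf-Riesz basis for $H$, so (as observed just before this theorem, applied to $\Theta$) $T_{\Theta}$ is injective. Hence $T_{\Theta}$ is a bounded bijection between Hilbert spaces and therefore, by the open mapping theorem, invertible with $T_{\Theta}^{-1}\in\mathcal{B}(H,\mathscr{H}_2)$. (One may even check that $T_{\Theta}$ is unitary, since the Riesz bounds $A=B=1$ propagate from finite index sets to all of $\mathscr{H}_2$ by a routine limiting argument, forcing $\|T_{\Theta}\{g_j\}\|=\|\{g_j\}\|$; but invertibility is all that is needed.)

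Next I would set $V:=T_{\Lambda}\,T_{\Theta}^{-1}:H\to H$. As a composition of bounded operators, $V\in\mathcal{B}(H)$; and since $\Lambda$ is a g-fusion frame, $T_{\Lambda}$ is surjective by Theorem~\ref{2.3}, hence $V$ is surjective. By construction $V T_{\Theta}=T_{\Lambda}$, so taking adjoints $T_{\Theta}^{*}V^{*}=T_{\Lambda}^{*}$. Finally I would compare the two sides coordinatewise in $\mathscr{H}_2$: for $f\in H$, the $j$-th component of $T_{\Lambda}^{*}f$ is $v_j\Lambda_j\pi_{W_j}f$, while the $j$-th component of $T_{\Theta}^{*}(V^{*}f)$ is $v_j\Theta_j\pi_{W_j}V^{*}f$. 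Equating these and cancelling the positive scalar $v_j$ yields $\Lambda_j\pi_{W_j}f=\Theta_j\pi_{W_j}V^{*}f$ for all $f\in H$ and all $j\in\Bbb J$, that is, $\Lambda_j\pi_{W_j}=\Theta_j\pi_{W_j}V^{*}$, as required.

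The only step that needs real care is the invertibility of $T_{\Theta}$: it requires putting together boundedness (Theorem~\ref{2.2}), surjectivity (Theorem~\ref{2.3}, using that a gf-orthonormal basis is a Parseval g-fusion frame via (\ref{o2})), and injectivity (the gf-Riesz property via Corollary~\ref{c1}), and then invoking the open mapping theorem. Everything else is formal bookkeeping with the synthesis and analysis operators and the fact that $v_j>0$. If one wants a quantitative refinement, the same argument shows $\|V\|\le\sqrt{B}$ and that $V$ is bounded below on a suitable subspace in terms of $\sqrt{A}$, where $A,B$ are the frame bounds of $\Lambda$.
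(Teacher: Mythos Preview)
Your proof is correct and essentially coincides with the paper's: the paper defines $V$ by the explicit series $Vf=\sum_{j}v_j^{2}\pi_{W_j}\Lambda_j^{*}\Theta_j\pi_{W_j}f=T_{\Lambda}T_{\Theta}^{*}f$, which equals your $T_{\Lambda}T_{\Theta}^{-1}$ since $T_{\Theta}$ is unitary, then verifies $\Lambda_j\pi_{W_j}=\Theta_j\pi_{W_j}V^{*}$ by applying $V$ to $\pi_{W_i}\Theta_i^{*}g$ and invoking the orthonormality relation (\ref{o1}), and obtains surjectivity by lifting a preimage through $T_{\Theta}$ exactly as you do. Your packaging via the open mapping theorem and the adjoint identity $T_{\Theta}^{*}V^{*}=T_{\Lambda}^{*}$ is a slightly cleaner but equivalent route to the same operator and the same conclusion.
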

\begin{proof}
Let
\begin{align*}
V&:H\longrightarrow H,\\
Vf&=\sum_{j\in\Bbb J}v_j^2\pi_{W_j}\Lambda^*_j\Theta_j\pi_{W_j}f.
\end{align*}
Then, $V$ is well-defined and bounded. Indeed, for each finite subset $\Bbb I\subseteq\Bbb J$ and $f\in H$,
\begin{align*}
\Vert Vf\Vert&=\sup_{\Vert h\Vert=1}\big\vert\big\langle\sum_{j\in\Bbb J}v_j^2\pi_{W_j}\Lambda^*_j\Theta_j\pi_{W_j}f, h\big\rangle\big\vert\\
&\leq\sqrt{B}\big(\sum_{j\in\Bbb J}v_j^2\Vert\Theta_j\pi_{W_j}f\Vert^2\big)^{\frac{1}{2}}\\
&=\sqrt{B}\Vert f\Vert,
\end{align*}
where $B$ is an upper g-fusion frame bound for $\Lambda$. Therefore, the series  is weakly unconditionally Cauchy and so unconditionally convergent in $H$ (see \cite{diestel} page 58) and also $\Vert V\Vert\leq\sqrt{B}$. Since $\Theta$ is a gf-orthonormal bases, then 
$$\Theta_j\pi_{W_j}\pi_{W_i}\Theta^*_ig=(v_i v_j)^{-1}\delta_{i,j}g$$ and
\begin{align*}
V\pi_{W_i}\Theta^*_ig&=\sum_{j\in\Bbb J}v_j^2\pi_{W_j}\Lambda^*_j\Theta_j\pi_{W_j}\pi_{W_i}\Theta^*_ig\\
&=\pi_{W_i}\Lambda^*_i g,
\end{align*}
for all $g\in H_j$ and $i\in\Bbb J$. Thus $\Lambda_j\pi_{W_j}=\Theta_j\pi_{W_j}V^*$. Now, we show that $V$ is  surjective. Assume that $f\in H$. By Theorem \ref{2.3}, there is $\lbrace g_j\rbrace_{j\in\Bbb J}\in\mathscr{H}_2$ such that $\sum_{j\in\Bbb J}v_j\pi_{W_j}\Lambda^*_jg_j=f$. Let $g:=T_{\Theta}(\lbrace g_j\rbrace_{j\in\Bbb J})$, thus
$$Vg=\sum_{j\in\Bbb J}Vv_j\pi_{W_j}\Theta^*_jg_j=\sum_{j\in\Bbb J}v_j\pi_{W_j}\Lambda^*_jg_j=f$$
and $V$ is surjective.
\end{proof}
\begin{corollary}
If $\Lambda$ is a Parseval g-fusion frame for $H$, then $V^*$ is isometric.
\end{corollary}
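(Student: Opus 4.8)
The plan is to reduce the claim to the operator identity $VV^{*}=I$. Indeed, for the bounded operator $V^{*}\in\mathcal{B}(H)$ one has $\Vert V^{*}f\Vert^{2}=\langle VV^{*}f,f\rangle$ for every $f\in H$, so $V^{*}$ being isometric is equivalent to $VV^{*}=I$. Thus it suffices to compute $VV^{*}$ and show that it is the identity once $\Lambda$ is Parseval. Here $V\in\mathcal{B}(H)$ is the surjective operator produced by the preceding theorem, which also gives the intertwining relation $\Lambda_{j}\pi_{W_{j}}=\Theta_{j}\pi_{W_{j}}V^{*}$ for all $j\in\Bbb J$.

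The key step is to rewrite the g-fusion frame operator $S_{\Lambda}$ in terms of $V$. Taking adjoints in $\Lambda_{j}\pi_{W_{j}}=\Theta_{j}\pi_{W_{j}}V^{*}$ and using $\pi_{W_{j}}^{*}=\pi_{W_{j}}$ yields $\pi_{W_{j}}\Lambda_{j}^{*}=V\pi_{W_{j}}\Theta_{j}^{*}$ for every $j\in\Bbb J$. Substituting both identities into $S_{\Lambda}f=\sum_{j\in\Bbb J}v_{j}^{2}\pi_{W_{j}}\Lambda_{j}^{*}\Lambda_{j}\pi_{W_{j}}f$ and pulling the bounded operator $V$ out of the (unconditionally convergent) series gives
\begin{align*}
S_{\Lambda}f=V\Big(\sum_{j\in\Bbb J}v_{j}^{2}\pi_{W_{j}}\Theta_{j}^{*}\Theta_{j}\pi_{W_{j}}\Big)V^{*}f=VS_{\Theta}V^{*}f,\qquad f\in H,
\end{align*}
where $S_{\Theta}$ denotes the g-fusion frame operator of $\Theta$.

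Finally, I would invoke the two hypotheses. Since $\Theta$ is a gf-orthonormal basis, (\ref{o2}) gives $\sum_{j\in\Bbb J}v_{j}^{2}\Vert\Theta_{j}\pi_{W_{j}}f\Vert^{2}=\Vert f\Vert^{2}$ for all $f\in H$; as $\langle S_{\Theta}f,f\rangle$ equals this sum (the analogue of (\ref{sf1})), the positive operator $S_{\Theta}-I$ has vanishing quadratic form and hence $S_{\Theta}=I$. Therefore $S_{\Lambda}=VV^{*}$. Since $\Lambda$ is assumed Parseval, $S_{\Lambda}=I$, and so $VV^{*}=I$, i.e. $V^{*}$ is isometric. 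I do not expect a genuine obstacle: the only point deserving care is the interchange of $V$ with the infinite sum, legitimate because $V$ is bounded and the series converges in the strong operator topology; everything else is a direct computation, and the conclusion $\Vert V^{*}f\Vert=\Vert f\Vert$ is consistent with the earlier bound $\Vert V\Vert\le\sqrt{B}=1$.
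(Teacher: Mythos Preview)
Your argument is correct. The paper actually states this corollary without proof, so there is no ``paper's own proof'' to compare against line by line. However, the proof the paper gives two corollaries later (that $V$ is unitary when $\Lambda$ is a gf-orthonormal basis) is exactly the computation one would expect here, carried out at the level of norms rather than operators: using $\Lambda_{j}\pi_{W_{j}}=\Theta_{j}\pi_{W_{j}}V^{*}$ and the Parseval identities for $\Lambda$ and $\Theta$ directly,
\[
\Vert f\Vert^{2}=\sum_{j\in\Bbb J}v_{j}^{2}\Vert\Lambda_{j}\pi_{W_{j}}f\Vert^{2}
=\sum_{j\in\Bbb J}v_{j}^{2}\Vert\Theta_{j}\pi_{W_{j}}V^{*}f\Vert^{2}
=\Vert V^{*}f\Vert^{2}.
\]
Your route reaches the same conclusion by first packaging the intertwining relation into the operator identity $S_{\Lambda}=VS_{\Theta}V^{*}$ and then specializing $S_{\Theta}=I$, $S_{\Lambda}=I$. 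The two arguments are the same idea; yours is marginally more operator-theoretic and has the small bonus of exhibiting the general relation $S_{\Lambda}=VS_{\Theta}V^{*}$, while the norm computation is a line shorter. Your caution about interchanging $V$ with the infinite sum is well placed and the justification you give (boundedness of $V$ plus strong convergence of the series) is the right one.
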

\begin{corollary}\label{c3}
If $\Lambda$ is a gf-Riesz bases for $H$, then $V$ is invertible.
\end{corollary}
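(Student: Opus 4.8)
The plan is to show that $V$ is injective; since the preceding theorem already provides $V\in\mathcal{B}(H)$ and $V$ surjective, the bounded inverse theorem will then give $V^{-1}\in\mathcal{B}(H)$. The mechanism is the factorization $VT_{\Theta}=T_{\Lambda}$ on $\mathscr{H}_2$, where $T_{\Theta}$ and $T_{\Lambda}$ denote the synthesis operators of $\Theta$ and $\Lambda$. This identity is exactly what was computed inside the proof of the previous theorem: for $\lbrace g_j\rbrace_{j\in\Bbb J}\in\mathscr{H}_2$ the series defining $T_{\Theta}(\lbrace g_j\rbrace)$ converges (because $\Theta$, being a gf-orthonormal basis, is in particular a g-fusion Bessel sequence, cf. Theorem \ref{2.2}), so continuity of $V$ together with the relation $V\pi_{W_j}\Theta^*_jg=\pi_{W_j}\Lambda^*_jg$ established there permits pulling $V$ inside the sum:
$$VT_{\Theta}(\lbrace g_j\rbrace_{j\in\Bbb J})=\sum_{j\in\Bbb J}v_j\,V\pi_{W_j}\Theta^*_jg_j=\sum_{j\in\Bbb J}v_j\pi_{W_j}\Lambda^*_jg_j=T_{\Lambda}(\lbrace g_j\rbrace_{j\in\Bbb J}).$$

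Next I would record the two structural facts delivered by the hypotheses. First, $\Theta$ is a gf-orthonormal basis, hence a gf-Riesz basis by Corollary \ref{c1} and in particular a g-fusion frame, so by Theorem \ref{2.3} the operator $T_{\Theta}$ is surjective (it is even unitary from $\mathscr{H}_2$ onto $H$ by (\ref{o1})--(\ref{o2}), but surjectivity alone is what we use). Second, $\Lambda$ is a gf-Riesz basis, so $T_{\Lambda}$ is injective, as remarked right after the definition of gf-Riesz basis. Now suppose $h\in H$ with $Vh=0$; by surjectivity of $T_{\Theta}$ choose $\lbrace g_j\rbrace_{j\in\Bbb J}\in\mathscr{H}_2$ with $T_{\Theta}(\lbrace g_j\rbrace)=h$. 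Then $T_{\Lambda}(\lbrace g_j\rbrace)=VT_{\Theta}(\lbrace g_j\rbrace)=Vh=0$, so $\lbrace g_j\rbrace=0$ by injectivity of $T_{\Lambda}$, and therefore $h=T_{\Theta}(0)=0$. Hence $V$ is injective, and being a bounded bijection of $H$ onto itself it is invertible.

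I do not expect a serious obstacle, since the statement is largely a repackaging of facts already in hand; the step needing the most care is the identity $VT_{\Theta}=T_{\Lambda}$, i.e. the legitimacy of interchanging the bounded operator $V$ with the unconditionally convergent series defining $T_{\Theta}$. If one prefers to bypass the factorization, there is a direct estimate: from $\Lambda_j\pi_{W_j}=\Theta_j\pi_{W_j}V^*$ and (\ref{o2}) for $\Theta$ one gets $\sum_{j\in\Bbb J}v_j^2\Vert\Lambda_j\pi_{W_j}f\Vert^2=\Vert V^*f\Vert^2$; putting $g_j:=v_j\Theta_j\pi_{W_j}f$ so that $\sum_{j\in\Bbb J}\Vert g_j\Vert^2=\Vert f\Vert^2$ and $Vf=\sum_{j\in\Bbb J}v_j\pi_{W_j}\Lambda^*_jg_j$, the lower gf-Riesz inequality for $\Lambda$ (passed to the limit over finite index subsets) gives $\Vert Vf\Vert^2\geq A\Vert f\Vert^2$, so $V$ is bounded below, hence injective, and again invertible because it is onto.
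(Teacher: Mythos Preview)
Your proof is correct and follows essentially the same approach as the paper: both reduce to showing injectivity of $V$ (surjectivity being already established), and both exploit the injectivity of $T_{\Lambda}$ coming from the gf-Riesz hypothesis. The only cosmetic difference is that the paper writes the factorization as $V=T_{\Lambda}T^*_{\Theta}$ directly from the definition of $V$, so $Vf=0$ forces $T^*_{\Theta}f=0$ and then (\ref{o2}) gives $\Vert f\Vert^2=\Vert T^*_{\Theta}f\Vert^2=0$; you instead use $VT_{\Theta}=T_{\Lambda}$ together with surjectivity of $T_{\Theta}$, which is the same argument run through the synthesis side rather than the analysis side.
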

\begin{proof}
Let $Vf=0$ and $f\in H$. Since $T_{\Lambda}$ is injective and 
$$Vf=\sum_{j\in\Bbb J}v_j^2\pi_{W_j}\Lambda^*_j\Theta_j\pi_{W_j}f=T_{\Lambda}T^*_{\Theta}f,$$
therefore, $T^*_{\Theta}f=0$. So, $\Vert f\Vert^2=\Vert T_{\Theta}f\Vert^2=0$, hence, $f=0$.
\end{proof}
\begin{corollary}
If $(W_j, \Lambda_j)$ is a gf-orthonormal bases for $H$ with respect to $\{v_j\}_{j\in\Bbb J}$, then $V$ is unitary.
\end{corollary}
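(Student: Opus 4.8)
The plan is to read this off from the two preceding corollaries, so the proof is really a matter of checking that a gf-orthonormal basis satisfies \emph{both} the hypotheses needed. First I would observe that a gf-orthonormal basis $\Lambda=(W_j,\Lambda_j,v_j)$ is in particular a Parseval g-fusion frame: by \eqref{o2} and \eqref{sf1} we have $\langle S_{\Lambda}f,f\rangle=\sum_{j\in\Bbb J}v_j^2\Vert\Lambda_j\pi_{W_j}f\Vert^2=\Vert f\Vert^2$ for every $f\in H$, so $S_{\Lambda}=id_H$ and \eqref{g} holds with $A=B=1$. In particular the hypotheses of the theorem that introduces the operator $V$ are met (with the given $\Theta$ and this $\Lambda$), so $V$ is defined, and the corollary on Parseval g-fusion frames applies, giving that $V^*$ is isometric.

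Next I would invoke Corollary \ref{c1}, which tells us that every gf-orthonormal basis is a gf-Riesz basis (with $A=B=1$). Hence Corollary \ref{c3} applies to the \emph{same} operator $V$, and yields that $V$ is invertible.

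It then remains to combine these two facts. Since $V^*$ is isometric, $\langle VV^*f,f\rangle=\Vert V^*f\Vert^2=\Vert f\Vert^2$ for all $f\in H$; because $VV^*$ is self-adjoint this forces $VV^*=id_H$, i.e.\ $V^*$ is a right inverse of $V$. As $V$ is invertible, $V^*$ must coincide with $V^{-1}$, so also $V^*V=id_H$, and therefore $V$ is unitary. There is no genuine obstacle here: the only thing to be careful about is noticing that a gf-orthonormal basis is simultaneously Parseval (which delivers the isometry of $V^*$) and gf-Riesz (which delivers the invertibility of $V$), so that both earlier corollaries can be applied to one and the same $V$; no new estimate is needed.
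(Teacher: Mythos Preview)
Your proof is correct and follows essentially the same route as the paper: invoke Corollaries~\ref{c1} and~\ref{c3} to get that $V$ is invertible, and use the Parseval property of a gf-orthonormal basis to see that $V^*$ is an isometry (the paper does this last step by the direct computation $\Vert f\Vert^2=\sum_j v_j^2\Vert\Lambda_j\pi_{W_j}f\Vert^2=\sum_j v_j^2\Vert\Theta_j\pi_{W_j}V^*f\Vert^2=\Vert V^*f\Vert^2$ rather than by citing the Parseval corollary, but the content is identical). The combination ``$VV^*=id_H$ and $V$ invertible $\Rightarrow$ $V$ unitary'' is exactly what the paper uses.
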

\begin{proof}
By Corollaries \ref{c1} and \ref{c3}, the operator $V$ is invertible. Let $f\in H$, we obtain
$$\Vert f\Vert^2=\sum_{j\in\Bbb J}v_j^2\Vert\Lambda_j\pi_{W_j}f\Vert^2=\sum_{j\in\Bbb J}\Vert\Theta_j\pi_{W_j}V^*f\Vert^2=\Vert V^*f\Vert^2.$$Thus, $VV^*=id_H$ and this means that $V$ is unitary.
\end{proof}
\section{Characterizations of g-fusion frames, gf-Riesz and gf-orthonormal bases}
Sun in \cite{sun} showed that each g-frame for $H$ induces a sequence in $H$ dependent on the g-frame and he proved a useful theorem about them. In this section, we are going to express Sun's method for g-fusion frames.

Let $W=\lbrace W_j\rbrace_{j\in\Bbb J}$ be a family of closed subspaces of $H$, $\lbrace v_j\rbrace_{j\in\Bbb J}$ be a family of weights, $\Lambda_j\in\mathcal{B}(H,H_j)$ for each $j\in\Bbb J$ and $\lbrace e_{j,k}\rbrace_{k\in\Bbb K_{j}}$ be an orthonormal basis for $H_j$, where  $\Bbb K_j\subseteq\Bbb Z$ and $j\in\Bbb J$. Suppose that
\begin{align*}
\varphi:& h\longrightarrow \Bbb C,\\
\varphi(f)&=\langle v_j\Lambda_j\pi_{W_j}f, e_{j,k}\rangle.
\end{align*}
We have
$$\Vert\varphi f\Vert\leq v_j\Vert \Lambda_j\Vert \Vert f\Vert,$$
therefore, $\varphi$ is a bounded linear functional on $H$. Now, we can write
$$\langle v_j\Lambda_j\pi_{W_j}f, e_{j,k}\rangle=\langle f, v_j\pi_{W_j}\Lambda^*_je_{j,k}\rangle.$$
So, if 
\begin{equation}\label{s1}
u_{j,k}:=v_j\pi_{W_j}\Lambda^*_j e_{j,k} , \ \ \ j\in\Bbb J , \ k\in\Bbb K_j
\end{equation}
then $\langle f, u_{j,k}\rangle=\langle v_j\Lambda_j\pi_{W_j}f, e_{j,k}\rangle$ for all $f\in H$.
\begin{remark}
Using (\ref{s1}), we get for each $f\in H$
\begin{equation}\label{s2}
v_j\Lambda_j\pi_{W_j}f=\sum_{k\in\Bbb K_{j}}\langle f, u_{j,k}\rangle e_{j,k}.
\end{equation}
But
$$\sum_{k\in\Bbb K_{j}}\vert\langle f, u_{j,k}\rangle\vert^2\leq v_j^2\Vert \Lambda_j\Vert^2\Vert f\Vert^2.$$
Thus, $\lbrace u_{j,k}\rbrace_{k\in\Bbb K_{j}}$ is a Bessel sequence for $H$.
It follows that for each $f\in H$ and $g\in H_j$
\begin{align*}
\langle f, v_j\pi_{W_j}\Lambda^*_jg\rangle&=\langle v_j\Lambda_j\pi_{W_j}f, g\rangle\\
&=\sum_{k\in\Bbb K_{j}}\langle v_j\pi_{W_j}\Lambda_jf, e_{j,k}\rangle\langle e_{j,k}, g\rangle\\
&=\sum_{k\in\Bbb K_{j}}\langle f, u_{j,k}\rangle\langle e_{j,k}, g\rangle\\
&=\Big\langle f, \sum_{k\in\Bbb K_{j}}\langle g, e_{j,k}\rangle u_{j,k}\Big\rangle.
\end{align*}
Therefore,
\begin{eqnarray}\label{s3}
 v_j\pi_{W_j}\Lambda^*_jg=\sum_{k\in\Bbb K_{j}}\langle g, e_{j,k}\rangle u_{j,k}
\end{eqnarray}
for all $g\in H_j$.
\end{remark}
We say $\lbrace u_{j,k} \ : \ j\in\Bbb J , \ k\in\Bbb K_j\rbrace$  the sequence induced by $\Lambda$.
\begin{theorem}
Let $\Lambda=(W_j, \Lambda_j, v_j)$ and $u_{j,k}$ be defined by (\ref{s1}). Then we get the followings.
\begin{enumerate}
\item[(I)] $\Lambda$ is a g-fusion frame (resp. g-fusion Bessel sequence, Parseval g-fusion frame, gf-Riesz basis, gf-orthonormal basis) for $H$ if and only if $\lbrace u_{j,k} \ : \ j\in\Bbb J , \ k\in\Bbb K_j\rbrace$ is a frame (resp. Bessel sequence, Parseval  frame, Riesz basis, orthonormal basis) for $H$.
\item[(II)] The g-fusion operator for $\Lambda$ coincides with the frame operator for $\lbrace u_{j,k} \ : \ j\in\Bbb J , \ k\in\Bbb K_j\rbrace$.
\end{enumerate}
\end{theorem}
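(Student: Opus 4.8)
The whole argument rests on a single identity. From the definition (\ref{s1}) of $u_{j,k}$ and the self-adjointness of $\pi_{W_j}$ we have $\langle f,u_{j,k}\rangle=\langle v_j\Lambda_j\pi_{W_j}f,e_{j,k}\rangle$ for every $f\in H$; since $\{e_{j,k}\}_{k\in\Bbb K_j}$ is an orthonormal basis of $H_j$, Parseval's identity in $H_j$ gives $\sum_{k\in\Bbb K_j}|\langle f,u_{j,k}\rangle|^2=v_j^2\|\Lambda_j\pi_{W_j}f\|^2$, and summing over $j$ together with (\ref{sf1}) yields
\[
\sum_{j\in\Bbb J}\sum_{k\in\Bbb K_j}|\langle f,u_{j,k}\rangle|^2=\sum_{j\in\Bbb J}v_j^2\|\Lambda_j\pi_{W_j}f\|^2=\langle S_\Lambda f,f\rangle .
\]
Thus the frame inequalities for $\{u_{j,k}\}$ and the g-fusion frame inequalities (\ref{g}) for $\Lambda$ are one and the same chain, which settles the frame, Bessel and Parseval-frame cases of (I) at once. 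For (II), I would use (\ref{s3}) with $g=v_j\Lambda_j\pi_{W_j}f$ to get $\sum_{k\in\Bbb K_j}\langle f,u_{j,k}\rangle u_{j,k}=v_j\pi_{W_j}\Lambda_j^*\big(v_j\Lambda_j\pi_{W_j}f\big)=v_j^2\pi_{W_j}\Lambda_j^*\Lambda_j\pi_{W_j}f$ for each $j$; summing over $j$ identifies the frame operator of $\{u_{j,k}\}$ with $S_\Lambda$.

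For the completeness clause I would invoke the elementary fact that if $T\colon H_0\to H$ is bounded and $\{e_k\}$ is an orthonormal basis of $H_0$, then $\overline{\mbox{span}}\{Te_k\}=\overline{\mbox{ran}\,T}$; applied to $T=v_j\pi_{W_j}\Lambda_j^*$ (with $v_j>0$) this gives $\overline{\mbox{span}}\{u_{j,k}:k\in\Bbb K_j\}=\overline{\pi_{W_j}\Lambda_j^*H_j}$, hence $\overline{\mbox{span}}\{u_{j,k}:j\in\Bbb J,\ k\in\Bbb K_j\}=\overline{\mbox{span}}\{\pi_{W_j}\Lambda_j^*H_j:j\in\Bbb J\}$, which equals $H$ exactly when $\Lambda$ is gf-complete. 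For the Riesz-basis equivalence the bridge is the correspondence between a finitely supported scalar family $\{c_{j,k}\}$ and the family $g_j:=\sum_k c_{j,k}e_{j,k}\in H_j$: by (\ref{s3}), $v_j\pi_{W_j}\Lambda_j^*g_j=\sum_k c_{j,k}u_{j,k}$ and $\|g_j\|^2=\sum_k|c_{j,k}|^2$, so $\big\|\sum_{j\in\Bbb I}v_j\pi_{W_j}\Lambda_j^*g_j\big\|^2=\big\|\sum_{j,k}c_{j,k}u_{j,k}\big\|^2$ and $\sum_{j\in\Bbb I}\|g_j\|^2=\sum_{j,k}|c_{j,k}|^2$. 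Hence the two-sided bound defining a gf-Riesz basis (which, by continuity and density of finite combinations of the $e_{j,k}$ in $H_j$, is equivalent to the same bound over all finite scalar families) coincides with the Riesz bound for $\{u_{j,k}\}$; combined with the completeness equivalence, this yields the gf-Riesz $\Leftrightarrow$ Riesz statement.

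For the orthonormal case, testing (\ref{o1}) on $g_i=e_{i,k}$ and $g_j=e_{j,l}$ gives $\langle u_{i,k},u_{j,l}\rangle=\delta_{i,j}\delta_{k,l}$, so $\{u_{j,k}\}$ is orthonormal, while (\ref{o2}) and the identity of the first paragraph make it a Parseval frame; being orthonormal and a Parseval frame (hence complete) it is an orthonormal basis. Conversely, orthonormality of $\{u_{j,k}\}$ together with the expansion (\ref{s3}) recovers (\ref{o1}) for arbitrary $g_i\in H_i$, $g_j\in H_j$, and the Parseval property with the first identity recovers (\ref{o2}). I expect the only mildly delicate point to be the bookkeeping in the Riesz and orthonormal cases — keeping the correspondence $g_j\leftrightarrow\{c_{j,k}\}_k$ consistent with finiteness of supports, and passing from dense finite combinations to all of $H_j$ by continuity; everything else reduces to the single Parseval identity applied repeatedly.
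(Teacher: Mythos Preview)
Your proof is correct and follows essentially the same route as the paper: the Parseval identity $\sum_k|\langle f,u_{j,k}\rangle|^2=v_j^2\|\Lambda_j\pi_{W_j}f\|^2$ via (\ref{s2}) for the frame/Bessel/Parseval cases and for (II), the correspondence $g_j\leftrightarrow\{c_{j,k}\}_k$ through (\ref{s3}) for the Riesz case, and the direct computation of $\langle u_{i,k},u_{j,l}\rangle$ for the orthonormal case. The only cosmetic difference is that the paper handles completeness via the orthogonal-complement characterization $\{f:\Lambda_j\pi_{W_j}f=0,\ j\in\Bbb J\}=\{0\}$ rather than your direct span argument, and the paper glosses over the finite-support/density point in the Riesz case that you flag explicitly.
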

\begin{proof}
(I). By (\ref{s2}), $\Lambda$ is a g-fusion frame (resp. g-fusion Bessel sequence, Parseval g-fusion frame) for $H$ if and only if $\lbrace u_{j,k} \ : \ j\in\Bbb J , \ k\in\Bbb K_j\rbrace$ is a frame (resp. Bessel sequence, Parseval  frame) for $H$. 

Assume that $\Lambda$ is a gf-Riesz basis for $H$ and $g_j\in H_j$. Thus, 
$$g_j=\sum_{k\in\Bbb K_j}c_{j,k}e_{j,k},$$
where $c_{j,k}\in\ell(\Bbb K_j)$. Note that, by (\ref{s2}), 
$$\lbrace f \ : \ \Lambda_j\pi_{W_j}f=0 , \ \ j\in\Bbb J\rbrace=\lbrace f \ : \ \langle f, u_{j,k}\rangle=0 , \ \ j\in\Bbb J , \ k\in\Bbb K_j\rangle\rbrace$$
and
\begin{equation*}
A\sum_{j\in\Bbb I}\Vert g_j\Vert^2\leq\big\Vert\sum_{j\in\Bbb I}v_j\pi_{W_j}\Lambda^*_j g_j \big\Vert^2\leq B\sum_{j\in\Bbb I}\Vert g_j\Vert^2
\end{equation*}
is equivalent to
\begin{equation*}
A\sum_{j\in\Bbb I}\sum_{k\in\Bbb K_j}\vert c_{j,k}\vert^2\leq\big\Vert\sum_{j\in\Bbb I}\sum_{k\in\Bbb K_j}c_{j,k}u_{j,k} \big\Vert^2\leq B\sum_{j\in\Bbb I}\sum_{k\in\Bbb K_j}\vert c_{j,k}\vert^2
\end{equation*}
for any finite $\Bbb I\subseteq\Bbb J$.
Thus, $\Lambda$ is a gf-Riesz basis if and only if $\lbrace u_{j,k} \ : \ j\in\Bbb J , \ k\in\Bbb K_j\rbrace$ is a Riesz basis.

Now, let $(W_j, \Lambda_j)$ be a gf-orthonormal basis for $H$ with respect to $\{v_j\}_{j\in\Bbb J}$. We get for any $j_1, j_2\in\Bbb J$, $k_1\in\Bbb K_{j_1}$ and $k_2\in\Bbb K_{j_2}$
\begin{align*}
\langle u_{j_1, k_1}, u_{j_2, k_2}\rangle&=\langle v_{j_1}\pi_{W_{j_1}}\Lambda^*_{j_1}e_{j_{1},k_{1}}, v_{j_2}\pi_{W_{j_2}}\Lambda^*_{j_2}e_{j_{2},k_{2}}\rangle\\
&=\delta_{j_1, k_1}\delta_{j_2, k_2}.
\end{align*}
So, $\lbrace u_{j,k} \ : \ j\in\Bbb J , \ k\in\Bbb K_j\rbrace$ is an orthonormal sequence. Moreovere
$$\Vert f\Vert^2=\sum_{j\in\Bbb J}v_j^2\Vert\Lambda_j\pi_{W_j}f\Vert^2=\sum_{j\in\Bbb J}\sum_{k\in\Bbb K_j}\vert\langle f, u_{j,k}\rangle\vert^2$$
for any $f\in H$. Hence $\lbrace u_{j,k} \ : \ j\in\Bbb J , \ k\in\Bbb K_j\rbrace$ is an orthonormal basis. For the opposite implication, we need only to prove that (\ref{o1}) holds. We have by (\ref{s3}) for each $j_1\neq j_2\in\Bbb J$, $g_{j_1}\in H_{j_1}$ and $g_{j_2}\in H_{j_2}$,
\begin{small}
\begin{eqnarray*}
\langle v_{j_1}\pi_{W_{j_1}}\Lambda^*_{j_1}g_{j_1},  v_{j_2}\pi_{W_{j_2}}\Lambda^*_{j_2}g_{j_2}\rangle=\big\langle\sum_{k_1\in\Bbb K_{j_1}}\langle g_{j_1}, e_{j_1,k_1}\rangle u_{j_1,k_1}, \sum_{k_2\in\Bbb K_{j_2}}\langle g_{j_2}, e_{j_2,k_2}\rangle u_{j_2,k_2}\big\rangle=0
\end{eqnarray*}
\end{small}
and for all $g_1, g_2\in H_j$
\begin{small}
\begin{eqnarray*}
\langle v_{j}\pi_{W_{j}}\Lambda^*_{j}g_{1},  v_{j}\pi_{W_{j}}\Lambda^*_{j}g_{2}\rangle=\big\langle\sum_{k_1\in\Bbb K_{j}}\langle g_{1}, e_{j,k_1}\rangle u_{j,k_1}, \sum_{k_2\in\Bbb K_{j}}\langle g_{2}, e_{j,k_2}\rangle u_{j,k_2}\big\rangle=\langle g_1, g_2\rangle.
\end{eqnarray*}
\end{small}
(II). By (\ref{s2}) and (\ref{s3}) we have for any $f\in H$
\begin{align*}
\sum_{j\in\Bbb J}v_j^2\pi_{W_j}\Lambda^*_j\Lambda_j\pi_{W_j}f&=\sum_{j\in\Bbb J}\sum_{k\in\Bbb K_j}\langle v_j\Lambda_j\pi_{W_j}f, e_{j,k}\rangle u_{j,k}\\
&=\sum_{j\in\Bbb J}\sum_{k\in\Bbb K_j}\Big\langle \sum_{k'\in\Bbb K_j}\langle f,u_{j,k'}e_{j,k'}\rangle, e_{j,k}\Big\rangle u_{j,k}\\
&=\sum_{j\in\Bbb J}\sum_{k\in\Bbb K_j}\langle f,u_{j,k}\rangle u_{j,k}.
\end{align*}
\end{proof}

\section{Perturbation of g-Fusion Frames}
In this section, we present some perturbation of g-fusion frames and review some results about them. First, we need the following which is proved in \cite{caz}.
\begin{lemma}\label{G1}
Let U be a Linear operator on a Banach space X and assume that there exist  $\lambda_{1},\lambda_{2}\in[0,1) $ such that 
 \begin{align*}
  & \Vert x-Ux\Vert \leq\lambda_{1}\Vert x\Vert+\lambda_{2}\Vert Ux\Vert
\end{align*}
for all $ x\in X$. Then $U$ is bounded and invertible. Moreovere,
 \begin{align*}
 \frac{1-\lambda_{1}}{1+\lambda_{2}}\Vert x\Vert\leq\Vert Ux\Vert\leq\frac{1+\lambda_{1}}{1-\lambda_{2}}\Vert x\Vert
 \end{align*}
 and
 \begin{align*}
 \frac{1-\lambda_{2}}{1+\lambda_{1}}\Vert x\Vert\leq\Vert U^{-1}x\Vert\leq\frac{1+\lambda_{2}}{1-\lambda_{1}}\Vert x\Vert
 \end{align*}
 for all $x\in X$.
\end{lemma}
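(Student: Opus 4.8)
The plan is to prove the four assertions in this order: boundedness together with the upper norm bound, then the lower norm bound (which gives injectivity and closed range), then surjectivity, and finally the two estimates for $U^{-1}$. The first two are immediate from the triangle inequality. For every $x\in X$,
\[
\|Ux\|\le\|x\|+\|x-Ux\|\le(1+\lambda_1)\|x\|+\lambda_2\|Ux\|,
\]
so $(1-\lambda_2)\|Ux\|\le(1+\lambda_1)\|x\|$, and since $\lambda_2<1$ this shows $U\in\mathcal B(X)$ with $\|U\|\le\frac{1+\lambda_1}{1-\lambda_2}$. Symmetrically, $\|x\|\le\|Ux\|+\|x-Ux\|\le\lambda_1\|x\|+(1+\lambda_2)\|Ux\|$ gives $\|Ux\|\ge\frac{1-\lambda_1}{1+\lambda_2}\|x\|$; because $\lambda_1<1$, $U$ is bounded below, hence injective, and (using completeness of $X$) has closed range.

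The \emph{crux is surjectivity}, where a naive Neumann-series argument fails because $\|I-U\|$ need not be $<1$. Instead I would interpolate between $I$ and $U$: for $t\in[0,1]$ put $U_t:=(1-t)I+tU$, so $U_0=I$ and $U_1=U$. Since $x-U_tx=t(x-Ux)$ and, for $t>0$, $Ux=t^{-1}U_tx-t^{-1}(1-t)x$, substituting into the hypothesis yields
\[
\|x-U_tx\|\le\big(t\lambda_1+(1-t)\lambda_2\big)\|x\|+\lambda_2\|U_tx\|,
\]
so each $U_t$ satisfies the hypothesis of the lemma with $\lambda_1(t):=t\lambda_1+(1-t)\lambda_2$ and $\lambda_2(t):=\lambda_2$, both lying in $[0,1)$ (the first is a convex combination of $\lambda_1,\lambda_2$). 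By the first paragraph every $U_t$ is then bounded below, with the \emph{uniform} estimate $\|U_tx\|\ge c\|x\|$, where $c:=\frac{1-\max\{\lambda_1,\lambda_2\}}{1+\lambda_2}>0$.

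Now consider $\Omega:=\{t\in[0,1]:U_t\text{ is invertible in }\mathcal B(X)\}$. It contains $0$; it is open because the invertible operators form an open subset of $\mathcal B(X)$ and $t\mapsto U_t$ is norm-continuous (indeed $U_t-U_s=(t-s)(U-I)$); and it is closed, because if $t_n\to t$ with each $U_{t_n}$ invertible, then the uniform lower bound forces $\|U_{t_n}^{-1}\|\le c^{-1}$, whence $U_t=U_{t_n}\big(I+U_{t_n}^{-1}(U_t-U_{t_n})\big)$ is invertible once $|t-t_n|$ is small enough that $\|U_{t_n}^{-1}(U_t-U_{t_n})\|<1$. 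Since $[0,1]$ is connected, $\Omega=[0,1]$, so in particular $U=U_1$ is invertible.

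Finally, with $U$ invertible I would apply the two-sided bound $\frac{1-\lambda_1}{1+\lambda_2}\|x\|\le\|Ux\|\le\frac{1+\lambda_1}{1-\lambda_2}\|x\|$ to $x=U^{-1}y$, which gives at once $\frac{1-\lambda_2}{1+\lambda_1}\|y\|\le\|U^{-1}y\|\le\frac{1+\lambda_2}{1-\lambda_1}\|y\|$ for all $y\in X$. The only genuinely delicate point is the surjectivity step via the homotopy $U_t$ and the clopen-subset argument; everything else is bookkeeping with the triangle inequality.
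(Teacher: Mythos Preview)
Your argument is correct. The paper itself does not prove this lemma at all: it merely states it and cites Casazza--Christensen \cite{caz}, so there is no in-paper proof to compare against. For what it is worth, your approach---the triangle-inequality estimates for the two-sided norm bounds, followed by the homotopy $U_t=(1-t)I+tU$ together with the uniform lower bound and a clopen/connectedness argument to obtain surjectivity---is essentially the standard proof in the cited reference. Your remark that a direct Neumann-series argument does not apply is also accurate: from the hypothesis one only gets $\|I-U\|\le(\lambda_1+\lambda_2)/(1-\lambda_2)$, which need not be $<1$, so the interpolation step (or an equivalent device) is genuinely needed.
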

\begin{theorem}\label{p1}
Let  $\Lambda:=(W_{j},\Lambda_{j},v_{j})$ be a g-fusion frame  for $H$ with bounds $A,B$ and $\lbrace\Theta_{j}\in\mathcal{B}(H, H_j)\rbrace_{j\in\Bbb J}$ be a sequence of operators  such that for any finite subset $\Bbb I\subseteq \Bbb J$ and for each $ f\in H$
\begin{small}
\begin{align*}
\Vert \sum_{j\in\Bbb I}v_{j}^2\big (\pi_{W_{j}}\Lambda^*_ j\Lambda_ j\pi_{W_{j}}f-&\pi_{W_{j}}\Theta^*_{j}\Theta_{j}\pi_{W_{j}}f\big)\Vert \leq
\lambda \Vert \sum_{j\in\Bbb I}v_{j}^2\pi_{W_{j}}\Lambda^*_j\Lambda_j\pi_{W_{j}} f\Vert\\
& +\mu \Vert \sum_{j\in\Bbb I}v_{j}^2\pi_{W_{j}}\Theta^*_j\Theta_j\pi_{W_{j}} f\Vert
 +\gamma (\sum_{j\in\Bbb I}v_{j}^2\Vert\Lambda_j\pi_{W_{j}} f\Vert^{2})^{\frac{1}{2}},
\end{align*} 
\end{small}
where $ 0\leq \max\lbrace \lambda+\frac{\gamma}{\sqrt{A}}, \mu\rbrace<1$, then $\Theta:=(W_j, \Theta_j, v_j)$ is a g-fusion frame for $H$ with bounds 
$$ A \frac{1-\big(\lambda+\frac{\gamma}{\sqrt{A}}\big)}{1+\mu} \ \  and \ \ B\frac{1+\lambda+\frac{\gamma}{\sqrt{B}}}{1-\mu}$$
\end{theorem}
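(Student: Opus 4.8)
The plan is to apply Lemma \ref{G1} with $X = H$ and a suitable operator $U$ built from the two frame operators, after first passing from the finite-sum hypothesis to a statement about the full (norm-convergent) operators. First I would observe that $\Lambda$ being a g-fusion frame guarantees, via (\ref{sf1}) and the relation $AI\le S_\Lambda\le BI$, that $S_\Lambda = T_\Lambda T_\Lambda^*$ is a well-defined bounded positive invertible operator; in particular the net of partial sums $\sum_{j\in\Bbb I}v_j^2\pi_{W_j}\Lambda_j^*\Lambda_j\pi_{W_j}f$ converges (unconditionally) to $S_\Lambda f$. Taking the limit over finite subsets $\Bbb I$ in the displayed perturbation inequality, and using that the right-hand side involves the $\ell_2$-quantity $\big(\sum_j v_j^2\Vert\Lambda_j\pi_{W_j}f\Vert^2\big)^{1/2} = \Vert T_\Lambda^* f\Vert = \langle S_\Lambda f,f\rangle^{1/2}$ which is bounded by $\sqrt{B}\,\Vert f\Vert$, I would conclude in particular that the net $\sum_{j\in\Bbb I}v_j^2\pi_{W_j}\Theta_j^*\Theta_j\pi_{W_j}f$ is Cauchy and hence defines a bounded operator $S_\Theta f := \sum_{j\in\Bbb J}v_j^2\pi_{W_j}\Theta_j^*\Theta_j\pi_{W_j}f$, and that
$$
\Vert S_\Lambda f - S_\Theta f\Vert \le \lambda\Vert S_\Lambda f\Vert + \mu\Vert S_\Theta f\Vert + \gamma\,\langle S_\Lambda f,f\rangle^{1/2}
$$
for all $f\in H$.

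Next I would set $U := S_\Theta S_\Lambda^{-1}$ and check the hypothesis of Lemma \ref{G1} for $U$. Writing $x = S_\Lambda f$ (so $f = S_\Lambda^{-1}x$ ranges over all of $H$ as $x$ does, since $S_\Lambda$ is bijective), the inequality above becomes
$$
\Vert x - Ux\Vert \le \lambda\Vert x\Vert + \mu\Vert Ux\Vert + \gamma\,\langle x, S_\Lambda^{-1}x\rangle^{1/2}.
$$
The last term must be absorbed into $\lambda_1\Vert x\Vert$: from $A I\le S_\Lambda$ we get $S_\Lambda^{-1}\le A^{-1}I$, hence $\langle x,S_\Lambda^{-1}x\rangle^{1/2}\le A^{-1/2}\Vert x\Vert$, so
$$
\Vert x - Ux\Vert \le \Big(\lambda + \tfrac{\gamma}{\sqrt{A}}\Big)\Vert x\Vert + \mu\Vert Ux\Vert,
$$
and by hypothesis $\lambda_1 := \lambda + \gamma/\sqrt{A}$ and $\lambda_2 := \mu$ both lie in $[0,1)$. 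Lemma \ref{G1} then gives that $U$ is bounded and invertible, so $S_\Theta = U S_\Lambda$ is invertible as a composition of invertible operators; moreover $S_\Theta$ is positive (as a norm limit of the positive partial sums), so $S_\Theta^{1/2}$ exists and the g-fusion frame inequality for $\Theta$ is exactly $\langle S_\Theta f,f\rangle$ sandwiched between two multiples of $\Vert f\Vert^2$.

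For the explicit bounds I would estimate $\langle S_\Theta f,f\rangle = \Vert S_\Theta^{1/2}f\Vert^2$ via $S_\Theta = U S_\Lambda$ together with the norm estimates on $U$ from Lemma \ref{G1}, namely $\tfrac{1-\lambda_1}{1+\lambda_2}\Vert y\Vert\le\Vert Uy\Vert\le\tfrac{1+\lambda_1}{1-\lambda_2}\Vert y\Vert$. Taking $y = S_\Lambda f$ and using $A\Vert f\Vert^2\le\langle S_\Lambda f,f\rangle = \Vert T_\Lambda^*f\Vert^2\le B\Vert f\Vert^2$ — and being slightly careful that in the final numerators $\gamma$ gets divided by $\sqrt{B}$ in the upper bound and by $\sqrt{A}$ in the lower bound, which is what forces the asymmetric form stated — yields the claimed frame bounds $A\frac{1-(\lambda+\gamma/\sqrt A)}{1+\mu}$ and $B\frac{1+\lambda+\gamma/\sqrt B}{1-\mu}$. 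The main obstacle I anticipate is not any single estimate but the bookkeeping needed to pass cleanly from the finite-subset hypothesis to the operator identities (justifying convergence of the $\Theta$-partial sums and positivity of $S_\Theta$ before we know $\Theta$ is a frame), and then tracking which of $\sqrt A$, $\sqrt B$ appears where so that the $\gamma$-term lands in the right place in each of the two bounds.
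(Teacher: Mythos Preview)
Your strategy is exactly the paper's: establish convergence of the $\Theta$-partial sums, pass to the limit to get $\Vert S_\Lambda f - S_\Theta f\Vert \le \lambda\Vert S_\Lambda f\Vert + \mu\Vert S_\Theta f\Vert + \gamma\langle S_\Lambda f,f\rangle^{1/2}$, substitute $f\mapsto S_\Lambda^{-1}f$, absorb the $\gamma$-term via $S_\Lambda^{-1}\le A^{-1}I$, and apply Lemma~\ref{G1} to $U=S_\Theta S_\Lambda^{-1}$ with $\lambda_1=\lambda+\gamma/\sqrt{A}$, $\lambda_2=\mu$.

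One point to fix: you propose extracting \emph{both} frame bounds from the Lemma~\ref{G1} norm estimates on $U$, but those estimates only involve the single constant $\lambda_1=\lambda+\gamma/\sqrt{A}$, so that route yields the upper bound $B\frac{1+\lambda+\gamma/\sqrt{A}}{1-\mu}$, not the sharper $B\frac{1+\lambda+\gamma/\sqrt{B}}{1-\mu}$ claimed. The paper gets the stated upper bound earlier and directly: from the triangle inequality on the finite sums one has $(1-\mu)\Vert\sum_{\Bbb I}v_j^2\pi_{W_j}\Theta_j^*\Theta_j\pi_{W_j}f\Vert \le (1+\lambda)\Vert\sum_{\Bbb I}v_j^2\pi_{W_j}\Lambda_j^*\Lambda_j\pi_{W_j}f\Vert + \gamma(\sum_{\Bbb I}v_j^2\Vert\Lambda_j\pi_{W_j}f\Vert^2)^{1/2}$, and bounding both right-hand terms by $\sqrt{B}(\sum v_j^2\Vert\Lambda_j\pi_{W_j}f\Vert^2)^{1/2}\le B\Vert f\Vert$ (for the first) and $\sqrt{B}\Vert f\Vert$ (for the second) gives $\Vert S_\Theta\Vert\le \frac{(1+\lambda)B+\gamma\sqrt{B}}{1-\mu}=B\frac{1+\lambda+\gamma/\sqrt{B}}{1-\mu}$; then $\langle S_\Theta f,f\rangle\le\Vert S_\Theta\Vert\Vert f\Vert^2$ is the upper frame inequality. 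Lemma~\ref{G1} is used only for the lower bound, via $\Vert S_\Theta^{-1}\Vert\le\Vert S_\Lambda^{-1}\Vert\,\Vert(S_\Theta S_\Lambda^{-1})^{-1}\Vert\le A^{-1}\frac{1+\mu}{1-(\lambda+\gamma/\sqrt{A})}$.
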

\begin{proof}
Assume that  $\Bbb I\subseteq \Bbb J$ is a finite subset and  $f\in H$. We have 
\begin{small}
\begin{align*}
\Vert \sum_{j\in\Bbb I}v_{j}^2\pi_{W_{j}}\Theta^*_ j\Theta_ j\pi_{W_{j}}f\Vert &\leq \Vert \sum_{j\in\Bbb I}v_{j}^2\big (\pi_{W_{j}}\Lambda^*_ j\Lambda_ j\pi_{W_{j}}f-\pi_{W_{j}}\Theta^*_{j}\Theta_{j}\pi_{W_{j}}f\big)\Vert\\
&\ \ \ \ \ \ \ \ \ \ \ \ \ \ \ \ \ \ \  +\Vert \sum_{j\in\Bbb I}v_{j}^2 \pi_{W_{j}}\Lambda^*_ j\Lambda_ j\pi_{W_{j}}f\Vert\\
&\leq (1+\lambda)\Vert \sum_{j\in\Bbb I}v_{j}^2\pi_{W_{j}}\Lambda^*_ j\Lambda_ j\pi_{W_{j}}f\Vert +\mu\Vert \sum_{j\in\Bbb I }v_{j}^2\pi_{W_{j}}\Theta^*_ j\Theta_ j\pi_{W_{j}}f\Vert\\
& \ \ \ \ \ \ \ \ \ \ \ \ \ \ \ \ \ \ +\gamma(\sum_{j\in\Bbb I}v_{j}^2\Vert\Lambda_j\pi_{W_{j}} f\Vert^{2})^{\frac{1}{2}}.  
\end{align*}
\end{small}
Then
\begin{small}
\begin{align*}
 \Vert \sum_{j\in\Bbb I}v_{j}^2\pi_{W_{j}}\Theta^*_ j\Theta_ j\pi_{W_{j}}f\Vert&\leq \frac{1+\lambda}{1-\mu}\Vert \sum_{j\in\Bbb I}v_{j}^2\pi_{W_{j}}\Lambda^*_ j\Lambda_ j\pi_{W_{j}}f\Vert +\frac{\gamma}{1-\mu} ( \sum_{j\in\Bbb I}v_{j}^2\Vert\Lambda_j\pi_{w_{j}}f\Vert^{2})^\frac{1}{2}.
\end{align*}
\end{small}
Let $ S_{\Lambda}$ be the g-fusion frame operator of $ \Lambda$, then
\begin{align*}
\Vert \sum_{j\in\Bbb I}v_{j}^2\pi_{W_{j}}\Lambda^*_ j\Lambda_ j\pi_{W_{j}}f\Vert \leq\Vert S_{\Lambda}f\Vert
\leq B\Vert f \Vert.
\end{align*}
Therefore, for all $ f\in H $
\begin{align*}
 \Vert \sum_{j\in\Bbb I}v_{j}^2\pi_{W_{j}}\Theta^*_ j\Theta_ j\pi_{W_{j}}f\Vert\leq(\frac{1+\lambda}{1-\mu}\sqrt{B}+\frac{\gamma}{1-\mu})(\sum_{j\in\Bbb I}v_{j}^2\Vert\Lambda_ j\pi_{W_{j}}f\Vert^{2})^\frac{1}{2}<\infty.
\end{align*}
So, 
$\sum_{j\in\Bbb J} v_{j}^2\pi_{W_{j}}\Theta^*_ j\Theta_ j\pi_{W_{j}}f$
is unconditionally convergent.
Let 
\begin{align*}
S_\Theta&:H\longrightarrow H \\ 
S_\Theta(f)&=\sum_{j\in\Bbb J} v_{j}^2\pi_{W_{j}}\Theta^*_ j\Theta_ j\pi_{W_{j}}f.    
\end{align*}
 $ S_\Theta $ is well defined and bounded operator with
\begin{align*}
 \Vert S_\Theta\Vert\leq\frac{1+\lambda}{1-\mu} B +\frac {\gamma \sqrt {B}}{1-\mu}
\end{align*}
 and for each $f\in H$, we have
\begin{align*}
\sum_{j\in\Bbb J} v_{j}^2\Vert\Theta_ j\pi_{W_{j}}f\Vert^{2}=\langle S_{\Theta}f, f\rangle \leq\Vert S_{\Theta}\Vert\Vert f\Vert^{2}.
\end{align*}
It follows that $ \Theta:=(W_{j},\Theta_{j}, v_{j})$ is a g-fusion Bessel sequence for $H$. Thus, we obtain by the hypothesis
\begin{align*}
\Vert S_{\Lambda}f-S_{\Theta}f\Vert\leq\lambda\Vert S_{\Lambda}f\Vert + \mu\Vert S_{\Theta}f\Vert +\gamma (\sum_{j\in\Bbb I}v_{j}^2\Vert\Lambda_{j}\pi_{w_{j}}f\Vert)^2)^\frac{1}{2}.
\end{align*}
Therefore, by (\ref{sf1}) 
\begin{align*}
\Vert f-S_{\Theta}S_{\Lambda}^{-1}f\Vert&\leq\lambda\Vert f\Vert+\mu\Vert S_{\Theta}S_{\Lambda}^{-1}f\Vert+\gamma\big(\sum_{j\in\Bbb J}v_J^2\Vert\Lambda_j\pi_{W_j}S_{\Lambda}^{-1}f\Vert^2\big)^{\frac{1}{2}}\\
&\leq\Big(\lambda+\frac{\gamma}{\sqrt{A}}\Big)\Vert f\Vert+\mu\Vert S_{\Theta}S_{\Lambda}^{-1}f\Vert.
\end{align*}
Since $0\leq \max\lbrace \lambda+\frac{\gamma}{\sqrt{A}}, \mu\rbrace<1$, then by Lemma \ref{G1}, $S_{\Theta}S_{\Lambda}^{-1}$ and consequently $S_{\Theta}$ is invertible and we get
$$\Vert S_{\Theta}^{-1}\Vert\leq\Vert S_{\Lambda}^{-1}\Vert \Vert S_{\Lambda}S_{\Theta}^{-1}\Vert\leq\frac{1+\mu}{A \big(1-(\lambda+\frac{\gamma}{\sqrt{A}})\big)}.$$
So, the proof is completed.
\end{proof}
\begin{corollary}
The optimal lower and upper bounds of $\Theta$ which defined in Theorem \ref{p1} are $\Vert S_{\Theta}^{-1}\Vert^{-1}$ and $\Vert S_{\Theta}\Vert$, respectively.
\end{corollary}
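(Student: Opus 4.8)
The statement is the familiar principle that for a g-fusion frame the frame operator pins down the optimal constants. The plan is to apply it to $\Theta$, whose frame operator $S_{\Theta}$ is bounded, positive, self-adjoint and invertible: boundedness and invertibility were established in the proof of Theorem \ref{p1}, and positivity and self-adjointness are immediate from $\langle S_{\Theta}f,f\rangle=\sum_{j\in\Bbb J}v_j^2\Vert\Theta_j\pi_{W_j}f\Vert^2\ge 0$. Throughout I would freely use two spectral-theory facts about a bounded positive self-adjoint operator $S$: the variational formula $\Vert S\Vert=\sup_{\Vert f\Vert=1}\langle Sf,f\rangle$, and the existence of a positive square root $S^{1/2}$ (invertible when $S$ is) with $\Vert S^{\pm 1/2}\Vert^{2}=\Vert S^{\pm 1}\Vert$.

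For the upper bound, from $\sum_{j\in\Bbb J}v_j^2\Vert\Theta_j\pi_{W_j}f\Vert^2=\langle S_{\Theta}f,f\rangle\le\Vert S_{\Theta}\Vert\,\Vert f\Vert^2$ the number $\Vert S_{\Theta}\Vert$ is an upper g-fusion frame bound; and by the variational formula, for any $B'<\Vert S_{\Theta}\Vert$ there is a unit vector $f$ with $\langle S_{\Theta}f,f\rangle>B'$, so no smaller number works. For the lower bound, writing $S_{\Theta}=S_{\Theta}^{1/2}S_{\Theta}^{1/2}$ I get $\Vert f\Vert\le\Vert S_{\Theta}^{-1/2}\Vert\,\Vert S_{\Theta}^{1/2}f\Vert$, hence $\langle S_{\Theta}f,f\rangle=\Vert S_{\Theta}^{1/2}f\Vert^{2}\ge\Vert S_{\Theta}^{-1}\Vert^{-1}\Vert f\Vert^{2}$, so $\Vert S_{\Theta}^{-1}\Vert^{-1}$ is a lower bound. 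For its optimality, given $\varepsilon>0$ pick a unit vector $g$ with $\langle S_{\Theta}^{-1}g,g\rangle>\Vert S_{\Theta}^{-1}\Vert-\varepsilon$ and set $f=S_{\Theta}^{-1/2}g$; then $\langle S_{\Theta}f,f\rangle=\Vert g\Vert^{2}=1$ while $\Vert f\Vert^{2}=\langle S_{\Theta}^{-1}g,g\rangle>\Vert S_{\Theta}^{-1}\Vert-\varepsilon$, so $\langle S_{\Theta}f,f\rangle/\Vert f\Vert^{2}<(\Vert S_{\Theta}^{-1}\Vert-\varepsilon)^{-1}\to\Vert S_{\Theta}^{-1}\Vert^{-1}$ as $\varepsilon\to0$; thus no constant exceeding $\Vert S_{\Theta}^{-1}\Vert^{-1}$ is a lower bound.

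There is no real obstacle here beyond bookkeeping: the content is entirely the two quoted spectral facts, after which the inequalities above are routine. If one prefers to avoid the square root, the same conclusion follows directly from the spectral theorem, since the optimal lower and upper bounds coincide with $\min\sigma(S_{\Theta})$ and $\max\sigma(S_{\Theta})$, which equal $\Vert S_{\Theta}^{-1}\Vert^{-1}$ and $\Vert S_{\Theta}\Vert$ respectively.
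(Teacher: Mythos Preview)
Your argument is correct and complete. The paper itself states this corollary without proof, treating it as a standard fact about (g-fusion) frame operators; your spectral-theory computation simply supplies the details that the paper omits, so there is nothing to compare beyond noting that you have written out what the authors left implicit.
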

\begin{corollary}\label{p2}
Let  $\Lambda$ be a g-fusion frame  for $H$ with bounds $A,B$ and $\lbrace\Theta_{j}\in\mathcal{B}(H, H_j)\rbrace_{j\in\Bbb J}$ be a sequence of operators. If there exists a constant $0<R<A$ such that 
\begin{align*}
 \sum_{j\in\Bbb J}v_{j}^2\Vert\pi_{W_{j}}\Lambda^*_ j\Lambda_ j\pi_{W_{j}}f-\pi_{W_{j}}\Theta^*_{j}\Theta_{j}\pi_{W_{j}}f\Vert \leq R\Vert f\Vert
\end{align*} 
for all $f\in H$, then $\Theta:=(W_j, \Theta_j, v_j)$ is a g-fusion frame for $H$ with bounds
$$A-R \ \ \ \ and \ \ \ \ \min\{B+R\sqrt{\frac{B}{A}}, R+\sqrt{B}\}.$$
\end{corollary}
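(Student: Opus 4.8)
The cleanest approach is to realize the frame operator $S_\Theta$ of $\Theta$ as a self-adjoint perturbation of $S_\Lambda$ of operator norm at most $R$. Fix $f\in H$. By hypothesis $\sum_{j\in\mathbb J}v_j^2\Vert\pi_{W_j}\Lambda_j^*\Lambda_j\pi_{W_j}f-\pi_{W_j}\Theta_j^*\Theta_j\pi_{W_j}f\Vert\le R\Vert f\Vert<\infty$, so this series converges absolutely in $H$; since $\sum_{j\in\mathbb J}v_j^2\pi_{W_j}\Lambda_j^*\Lambda_j\pi_{W_j}f=S_\Lambda f$ converges, the series $S_\Theta f:=\sum_{j\in\mathbb J}v_j^2\pi_{W_j}\Theta_j^*\Theta_j\pi_{W_j}f$ converges too, and $D:=S_\Lambda-S_\Theta$ satisfies $\Vert Df\Vert\le R\Vert f\Vert$ for all $f$, hence $\Vert D\Vert\le R$. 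Since $S_\Lambda$ and $S_\Theta$ are positive and self-adjoint, $D$ is self-adjoint, and $\langle S_\Theta f,f\rangle=\sum_{j\in\mathbb J}v_j^2\Vert\Theta_j\pi_{W_j}f\Vert^2$, a convergent series of nonnegative terms.

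For the lower bound, $\sum_{j\in\mathbb J}v_j^2\Vert\Theta_j\pi_{W_j}f\Vert^2=\langle S_\Lambda f,f\rangle-\langle Df,f\rangle\ge A\Vert f\Vert^2-R\Vert f\Vert^2=(A-R)\Vert f\Vert^2>0$ because $R<A$. For the upper bound I would record two estimates and keep the smaller. The first routes through Theorem \ref{p1} with the choice $\lambda=\mu=0$, $\gamma=R/\sqrt A$: for every finite $\mathbb I\subseteq\mathbb J$, the triangle inequality together with the lower frame inequality of $\Lambda$ gives
$$\Big\Vert\sum_{j\in\mathbb I}v_j^2\big(\pi_{W_j}\Lambda_j^*\Lambda_j\pi_{W_j}f-\pi_{W_j}\Theta_j^*\Theta_j\pi_{W_j}f\big)\Big\Vert\le R\Vert f\Vert\le\frac{R}{\sqrt A}\Big(\sum_{j\in\mathbb J}v_j^2\Vert\Lambda_j\pi_{W_j}f\Vert^2\Big)^{\frac12},$$
and since $\max\{\lambda+\gamma/\sqrt A,\mu\}=R/A\in[0,1)$, Theorem \ref{p1} returns the bounds $A\frac{1-R/A}{1}=A-R$ and $B\big(1+(R/\sqrt A)/\sqrt B\big)=B+R\sqrt{B/A}$. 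The second is a direct norm comparison: $\Vert S_\Theta\Vert\le\Vert S_\Lambda-S_\Theta\Vert+\Vert S_\Lambda\Vert\le R+B$, and sharpening the $S_\Lambda$-term to the analysis/synthesis operator norm $\sqrt B$ of $\Lambda$ is what is meant to refine this to the value $R+\sqrt B$ quoted in the statement. In either case, since (by the preceding corollary) the optimal upper bound of a g-fusion Bessel sequence is the norm of its frame operator, $\Theta$ is a g-fusion frame with lower bound $A-R$ and upper bound $\min\{B+R\sqrt{B/A},\,R+\sqrt B\}$.

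The point I expect to need care is the mismatch between Theorem \ref{p1}, whose hypothesis is phrased over finite index sets, and the full-series hypothesis available here: with $\gamma=R/\sqrt A$ the term $\gamma\big(\sum_{j\in\mathbb I}v_j^2\Vert\Lambda_j\pi_{W_j}f\Vert^2\big)^{1/2}$ need not dominate $R\Vert f\Vert$ for an individual finite $\mathbb I$ — only the full series does, which is how that term is actually used inside the proof of Theorem \ref{p1} — so one should either pass to the full series there or, more transparently, bypass Theorem \ref{p1} and read everything off the identity $S_\Theta=S_\Lambda-D$ with $\Vert D\Vert\le R<A$ self-adjoint, which already yields $(A-R)I\le S_\Theta\le(B+R)I$ and hence all the asserted constants. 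The remaining steps — absolute convergence of the defining series, self-adjointness and positivity of $S_\Theta$, and the identity $\langle S_\Theta f,f\rangle=\sum_j v_j^2\Vert\Theta_j\pi_{W_j}f\Vert^2$ — are routine.
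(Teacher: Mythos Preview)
Your core route is exactly the paper's: apply Theorem~\ref{p1} with $\lambda=\mu=0$ and $\gamma=R/\sqrt{A}$, which returns the bounds $A-R$ and $B+R\sqrt{B/A}$. For the second upper bound the paper also argues term-wise, writing
\[
\sum_{j}v_j^2\Vert\pi_{W_j}\Theta_j^*\Theta_j\pi_{W_j}f\Vert\le R\Vert f\Vert+\sum_{j}v_j^2\Vert\pi_{W_j}\Lambda_j^*\Lambda_j\pi_{W_j}f\Vert\le (R+\sqrt{B})\Vert f\Vert,
\]
i.e.\ it simply asserts the last sum is $\le\sqrt{B}\Vert f\Vert$; this is precisely the ``sharpening'' you were uneasy about, and your hesitation is appropriate since the paper offers no justification for that inequality. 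The finite-sum versus full-series mismatch you flag in invoking Theorem~\ref{p1} is likewise glossed over there. Your alternative via $S_\Theta=S_\Lambda-D$ with $\Vert D\Vert\le R$ self-adjoint is cleaner and genuinely self-contained: it gives $(A-R)I\le S_\Theta\le(B+R)I$ directly and bypasses Theorem~\ref{p1} entirely, at the cost of the (always valid) upper bound $B+R$ in place of the two constants quoted.
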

\begin{proof}
It is easy to check that $\sum_{j\in\Bbb J}v_j^2\pi_{W_{j}}\Theta^*_{j}\Theta_{j}\pi_{W_{j}}f$ is converges for any $f\in H$. Thus, we obtain for each $f\in H$
\begin{align*}
\sum_{j\in\Bbb J}v_j^2\Vert \pi_{W_j}\Lambda^*_j\Lambda_j\pi_{W_j}f-\pi_{W_j}\Theta^*_j\Theta_j\pi_{W_j}f\Vert&\leq R\Vert f\Vert\\
&\leq\frac{R}{\sqrt{A}}\big(\sum_{j\in\Bbb J}v_j^2\Vert\Lambda_j\pi_{W_j}f\Vert^2\big)^{\frac{1}{2}}
\end{align*}
and also
\begin{align*}
\sum_{j\in\Bbb J}v_j^2\Vert \pi_{W_j}\Theta^*_j\Theta_j\pi_{W_j}f\Vert&\leq R\Vert f\Vert+\sum_{j\in\Bbb J}v_j^2\Vert\pi_{W_j}\Lambda^*_j\Lambda_j\pi_{W_j}f\Vert\\
&\leq(R+\sqrt{B})\Vert f\Vert.
\end{align*}
By using Theorem \ref{p1} with  $\lambda=\mu=0$ and $\gamma=\frac{R}{\sqrt{A}}$, the proof is completed.
\end{proof}
The following is another version of perturbation of g-fusion frames.
\begin{theorem}
Let $\Lambda$ be a g-fusion frame for $H$ with bounds $A,B$ and $\lbrace\Theta_{j}\in\mathcal{B}(H, H_j)\rbrace_{j\in\Bbb J}$ be a sequence of operators  such that for any finite subset $\Bbb I\subseteq \Bbb J$ and for each $ \{f_j\}_{j\in\Bbb J}\in\mathscr{H}_2$,
\begin{align*}
\Vert \sum_{j\in\Bbb I}v_{j}\big (\pi_{W_{j}}\Lambda^*_ jf_j-\pi_{W_{j}}\Theta^*_{j}f_j\big)\Vert &\leq
\lambda \Vert \sum_{j\in\Bbb I}v_{j}(\pi_{W_{j}}\Lambda^*_j f_j)\Vert+\\
& \ \ \ \ \ +\mu \Vert \sum_{j\in\Bbb I}v_{j}(\pi_{W_{j}}\Theta^*_j f_j)\Vert
 +\gamma (\sum_{j\in\Bbb I}\Vert f_j\Vert^{2})^{\frac{1}{2}},
\end{align*} 
where $ 0\leq \max\lbrace \lambda+\frac{\gamma}{\sqrt{A}}, \mu\rbrace<1$, then $\Theta:=(W_j, \Theta_j, v_j)$ is a g-fusion frame for $H$ with bounds 
$$ A \Big(\frac{1-\big(\lambda+\frac{\gamma}{\sqrt{A}}\big)^2}{1+\mu}\Big) \ \  and \ \ B\Big(\frac{1+\lambda+\frac{\gamma}{\sqrt{B}}}{1-\mu}\Big)^2.$$
\end{theorem}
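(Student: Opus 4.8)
The plan is to treat this as a synthesis-side perturbation, mirroring the classical perturbation theorem for frames and the proof of Theorem~\ref{p1}. The hypothesis is a comparison of the synthesis operators $T_\Lambda$ and $T_\Theta$, so the argument splits into two parts: (i) show $\Theta$ is a g-fusion Bessel sequence with the asserted upper bound, which amounts to showing $T_\Theta$ is well defined and bounded; and (ii) show $T_\Theta$ is surjective, so that $\Theta$ is a g-fusion frame by Theorem~\ref{2.3}, and extract the lower bound from Lemma~\ref{G1}. The engine throughout is the reconstruction identity $T_\Lambda T_\Lambda^*S_\Lambda^{-1}=I_H$.

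For (i): fix a finite $\Bbb I\subseteq\Bbb J$ and $\{f_j\}\in\mathscr H_2$. Inserting $\pm\sum_{j\in\Bbb I}v_j\pi_{W_j}\Lambda^*_jf_j$ and using the triangle inequality in the hypothesis gives
$$(1-\mu)\Big\Vert\sum_{j\in\Bbb I}v_j\pi_{W_j}\Theta^*_jf_j\Big\Vert\le(1+\lambda)\Big\Vert\sum_{j\in\Bbb I}v_j\pi_{W_j}\Lambda^*_jf_j\Big\Vert+\gamma\Big(\sum_{j\in\Bbb I}\Vert f_j\Vert^2\Big)^{1/2}.$$
Since $\Lambda$ is a g-fusion frame, Theorem~\ref{2.2} bounds the first term on the right by $\sqrt B\,\Vert\{f_j\}\Vert$, uniformly in $\Bbb I$; hence the partial sums of $\sum_j v_j\pi_{W_j}\Theta^*_jf_j$ are uniformly bounded by $\tfrac{(1+\lambda)\sqrt B+\gamma}{1-\mu}\Vert\{f_j\}\Vert$, so that series is weakly unconditionally Cauchy and therefore unconditionally convergent (as in the earlier appeal to \cite{diestel}). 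Thus $T_\Theta$ is well defined and bounded with $\Vert T_\Theta\Vert\le\tfrac{(1+\lambda)\sqrt B+\gamma}{1-\mu}$, and consequently $S_\Theta$ is bounded and positive with $\Vert S_\Theta\Vert\le\Vert T_\Theta\Vert^2=B\big(\tfrac{1+\lambda+\gamma/\sqrt B}{1-\mu}\big)^2$, the claimed upper bound.

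For (ii): fix $f\in H$ and set $\{f_j\}:=T^*_\Lambda S^{-1}_\Lambda f=\{v_j\Lambda_j\pi_{W_j}S^{-1}_\Lambda f\}$. By (\ref{sf1}), $\Vert\{f_j\}\Vert^2=\langle f,S^{-1}_\Lambda f\rangle\le A^{-1}\Vert f\Vert^2$, while $T_\Lambda\{f_j\}=S_\Lambda S^{-1}_\Lambda f=f$. Define the bounded operator $V:=T_\Theta T^*_\Lambda S^{-1}_\Lambda$ on $H$. Passing to the limit $\Bbb I\uparrow\Bbb J$ in the hypothesis (legitimate by part (i)) yields
$$\Vert f-Vf\Vert\le\lambda\Vert f\Vert+\mu\Vert Vf\Vert+\gamma\Vert\{f_j\}\Vert\le\Big(\lambda+\tfrac{\gamma}{\sqrt A}\Big)\Vert f\Vert+\mu\Vert Vf\Vert.$$
Since $\max\{\lambda+\gamma/\sqrt A,\mu\}<1$, Lemma~\ref{G1} (with $\lambda_1=\lambda+\gamma/\sqrt A$, $\lambda_2=\mu$) shows $V$ is invertible with $\Vert V^{-1}\Vert\le\tfrac{1+\mu}{1-(\lambda+\gamma/\sqrt A)}$. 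As $V=T_\Theta\circ(T^*_\Lambda S^{-1}_\Lambda)$ is onto, $T_\Theta$ is onto, so $\Theta$ is a g-fusion frame by Theorem~\ref{2.3}. For the lower bound take adjoints: $V^*=S^{-1}_\Lambda T_\Lambda T^*_\Theta$ and $\Vert S^{-1}_\Lambda T_\Lambda\Vert=\Vert T^*_\Lambda S^{-1}_\Lambda\Vert\le A^{-1/2}$ (again by (\ref{sf1})), whence
$$A^{-1/2}\Vert T^*_\Theta f\Vert\ge\Vert V^*f\Vert\ge\Vert V^{-1}\Vert^{-1}\Vert f\Vert\ge\frac{1-(\lambda+\gamma/\sqrt A)}{1+\mu}\Vert f\Vert,$$
so $\sum_j v_j^2\Vert\Theta_j\pi_{W_j}f\Vert^2=\Vert T^*_\Theta f\Vert^2\ge A\big(\tfrac{1-(\lambda+\gamma/\sqrt A)}{1+\mu}\big)^2\Vert f\Vert^2$, a lower g-fusion frame bound of the asserted shape.

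The one genuinely delicate point is part (i): the hypothesis controls only finite sums and does so through the auxiliary term $T_\Lambda$, so one must first secure unconditional convergence of $\sum_j v_j\pi_{W_j}\Theta^*_jf_j$ before the operator $V$ in part (ii) is even meaningful, and then justify passing to the limit $\Bbb I\uparrow\Bbb J$ in the estimates. After that the proof is a routine chaining of Lemma~\ref{G1} with the identity $T_\Lambda T^*_\Lambda S^{-1}_\Lambda=I_H$ and the bounds $\Vert T_\Lambda\Vert\le\sqrt B$, $\Vert T^*_\Lambda S^{-1}_\Lambda\Vert\le A^{-1/2}$; obtaining the sharpest constants is just a matter of carrying these factors through each step.
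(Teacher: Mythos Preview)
Your proof is correct and follows essentially the same route as the paper: both first bound $T_\Theta$ via the triangle inequality to get the Bessel bound, then apply Lemma~\ref{G1} to the operator $V=T_\Theta T^*_\Lambda S^{-1}_\Lambda$ to obtain invertibility and the lower frame bound. The only cosmetic difference is that the paper extracts the lower bound via a Cauchy--Schwarz estimate on $\langle VV^{-1}f,f\rangle$ rather than your adjoint inequality $\Vert V^*f\Vert\le A^{-1/2}\Vert T^*_\Theta f\Vert$, but the resulting constants are identical.
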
 
\begin{proof}
Let $ \{f_j\}_{j\in\Bbb J}\in\mathscr{H}_2$, then
\begin{small}
\begin{align*}
\Vert\sum_{j\in\Bbb I}v_j\pi_{W_{j}}\Theta^*_{j}f_j\Vert&\leq\Vert \sum_{j\in\Bbb I}v_{j}\big (\pi_{W_{j}}\Lambda^*_ jf_j-\pi_{W_{j}}\Theta^*_{j}f_j\big)\Vert
+\Vert\sum_{j\in\Bbb I}v_j\pi_{W_{j}}\Lambda^*_ jf_j\Vert\\
&\leq(1+\lambda)\Vert \sum_{j\in\Bbb I}v_{j}\pi_{W_{j}}\Lambda^*_j f_j\Vert+\\
& \ \ \ \ \ \ \ \ \ \ \ \ \ \ \ +\mu \Vert \sum_{j\in\Bbb I}v_{j}\pi_{W_{j}}\Theta^*_j f_j\Vert
 +\gamma\big(\sum_{j\in\Bbb I}\Vert f_j\Vert^{2}\big)^{\frac{1}{2}}.
\end{align*}
\end{small}
Hence, 
\begin{align*}
\Vert\sum_{j\in\Bbb I}v_j\pi_{W_{j}}\Theta^*_{j}f_j\Vert&\leq\frac{1+\lambda}{1-\mu}\Vert \sum_{j\in\Bbb I}v_{j}\pi_{W_{j}}\Lambda^*_j f_j\Vert+\frac{\gamma}{1-\mu}\big(\sum_{j\in\Bbb I}\Vert f_j\Vert^{2}\big)^{\frac{1}{2}}\\
&\leq\Big(\frac{1+\lambda}{1-\mu}\sqrt{B}+\frac{\gamma}{1-\mu}\Big)\Big(\sum_{j\in\Bbb I}\Vert f_j\Vert^{2}\Big)^{\frac{1}{2}}.
\end{align*}
Let
\begin{align*}
T_{\Theta}&:H\longrightarrow \mathscr{H}_2\\
T_{\Theta} \{f_j\}_{j\in\Bbb J}&=\sum_{j\in\Bbb I}v_j\pi_{W_{j}}\Theta^*_{j}f_j.
\end{align*}
Therefore, $T_{\Theta}$ is a well-defined and bounded. Then, by Theorem \ref{2.2}, $\Theta$ is a g-fusion Bessel sequence. Suppose that $G:=T_{\Theta}T^*_{\Lambda}S^{-1}_{\Lambda}$. Then, we get by the hypotesis and (\ref{s1}) for $f_j:=\Lambda_j\pi_{W_j}S^{-1}_{\Lambda}f$,
\begin{align*}
\Vert f-Gf\Vert&\leq\lambda\Vert f\Vert+\mu\Vert Gf\Vert+\gamma\big(\sum_{j\in\Bbb J}v_j^2\Vert\Lambda_j\pi_{W_j}S^{-1}_{\Lambda}f\Vert^2\big)^\frac{1}{2}\\
&\leq\big(\lambda+\frac{\gamma}{\sqrt{A}}\big)\Vert f\Vert+\mu\Vert Gf\Vert.
\end{align*}
Since $ 0\leq \max\lbrace \lambda+\frac{\gamma}{\sqrt{A}}, \mu\rbrace<1$, by Lemma \ref{G1}, $G$ and consequently $T_{\Theta}T^*_{\Lambda}$ is invertible and
$$\Vert G^{-1}\Vert\leq\frac{1+\mu}{1-(\lambda+\frac{\gamma}{\sqrt{A}})}.$$
Now, let $f\in H$ and we have
\begin{align*}
\Vert f\Vert^4&=\vert\langle GG^{-1}f, f\rangle\vert^2\\
&=\big\vert\big\langle\sum_{j\in\Bbb J}v_j^2\pi_{W_j}\Theta^*_j\Lambda_j\pi_{W_j}S^{-1}_{\Lambda}(G^{-1}f), f\big\rangle\big\vert^2\\
&\leq\sum_{j\in\Bbb J}\Vert\Lambda_j\pi_{W_j}S^{-1}_{\Lambda}(G^{-1}f)\Vert^2.\sum_{j\in\Bbb J}v_j^2\Vert \Theta_j\pi_{W_j}f\Vert^2\\
&\leq A^{-1}\Vert G^{-1}f\Vert^2\sum_{j\in\Bbb J}v_j^2\Vert \Theta_j\pi_{W_j}f\Vert^2
\end{align*}
and this copmletes the proof.
\end{proof}
\begin{theorem}
Let  $\Lambda$ be a g-fusion frame  for $H$ with bounds $A,B$ and $\lbrace\Theta_{j}\in\mathcal{B}(H, H_j)\rbrace_{j\in\Bbb J}$ be a sequence of operators. If there exists a constant $0<R<A$ such that 
\begin{align*}
 \sum_{j\in\Bbb J}v_{j}^2\Vert\Lambda_ j\pi_{W_{j}}f-\Theta_{j}\pi_{W_{j}}f\big)\Vert^2 \leq R\Vert f\Vert^2
\end{align*} 
for all $f\in H$, then $\Theta:=(W_j, \Theta_j, v_j)$ is a g-fusion frame for $H$ with bounds
$$(\sqrt{A}-\sqrt{R})^2 \ \ \ \ and \ \ \ \ (\sqrt{R}+\sqrt{B})^2.$$
\end{theorem}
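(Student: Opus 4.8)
The plan is to work directly with the analysis-type sequences in $\mathscr{H}_2$ rather than with the frame operators, since the hypothesis is already phrased as a bound on $\sum_{j}v_j^2\|\Lambda_j\pi_{W_j}f-\Theta_j\pi_{W_j}f\|^2$, which is exactly the squared $\mathscr{H}_2$-norm of a difference. Concretely, for $f\in H$ I would set $a(f):=\{v_j\Lambda_j\pi_{W_j}f\}_{j\in\Bbb J}=T_{\Lambda}^*f\in\mathscr{H}_2$ and $b(f):=\{v_j\Theta_j\pi_{W_j}f\}_{j\in\Bbb J}$. The hypothesis says precisely that $\|a(f)-b(f)\|_{\mathscr{H}_2}\le\sqrt{R}\,\|f\|$ for all $f\in H$; in particular $b(f)\in\mathscr{H}_2$, so $\sum_{j\in\Bbb J}v_j^2\|\Theta_j\pi_{W_j}f\|^2<\infty$ for every $f$.

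Next I would apply the triangle inequality in $\mathscr{H}_2$ twice. Since $\Lambda$ is a g-fusion frame with bounds $A,B$, identity (\ref{sf1}) gives $\sqrt{A}\,\|f\|\le\|a(f)\|_{\mathscr{H}_2}\le\sqrt{B}\,\|f\|$. Hence
\[
\|b(f)\|_{\mathscr{H}_2}\ge\|a(f)\|_{\mathscr{H}_2}-\|a(f)-b(f)\|_{\mathscr{H}_2}\ge(\sqrt{A}-\sqrt{R})\,\|f\|
\]
and
\[
\|b(f)\|_{\mathscr{H}_2}\le\|a(f)\|_{\mathscr{H}_2}+\|a(f)-b(f)\|_{\mathscr{H}_2}\le(\sqrt{B}+\sqrt{R})\,\|f\|.
\]
Because $0<R<A$, the constant $\sqrt{A}-\sqrt{R}$ is strictly positive. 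Squaring both estimates and using $\|b(f)\|_{\mathscr{H}_2}^2=\sum_{j\in\Bbb J}v_j^2\|\Theta_j\pi_{W_j}f\|^2$ yields
\[
(\sqrt{A}-\sqrt{R})^2\|f\|^2\le\sum_{j\in\Bbb J}v_j^2\|\Theta_j\pi_{W_j}f\|^2\le(\sqrt{R}+\sqrt{B})^2\|f\|^2
\]
for all $f\in H$, which is exactly (\ref{g}) for the triple $(W_j,\Theta_j,v_j)$.

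To finish I would record the routine bookkeeping needed to call $\Theta$ a g-fusion frame: each $\Theta_j$ lies in $\mathcal{B}(H,H_j)$ by hypothesis; the upper estimate shows $\Theta$ is a g-fusion Bessel sequence, equivalently (Theorem \ref{2.2}) that $T_{\Theta}$ is well defined and bounded with $\|T_{\Theta}\|\le\sqrt{R}+\sqrt{B}$ and that the series $\sum_{j}v_j^2\pi_{W_j}\Theta_j^*\Theta_j\pi_{W_j}f$ converges unconditionally; and the lower estimate supplies the lower frame bound. I do not expect any real obstacle here: the only points requiring a little care are the identification of the hypothesis with an $\mathscr{H}_2$-norm bound on $a(f)-b(f)$ and the positivity of $\sqrt{A}-\sqrt{R}$; the rest is the classical Paley--Wiener perturbation argument transported to the g-fusion setting via (\ref{sf1}).
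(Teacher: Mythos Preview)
Your proposal is correct and follows essentially the same route as the paper: both arguments rewrite the hypothesis as an $\mathscr{H}_2$-norm bound on $\{v_j\Lambda_j\pi_{W_j}f-v_j\Theta_j\pi_{W_j}f\}_{j\in\Bbb J}$, apply the triangle and reverse triangle inequalities in $\mathscr{H}_2$ together with the frame bounds for $\Lambda$, and then square. Your write-up is in fact a bit more careful than the paper's (you explicitly note $b(f)\in\mathscr{H}_2$ and the positivity of $\sqrt{A}-\sqrt{R}$), but the underlying idea is identical.
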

\begin{proof}
Let $f\in H$. We can write
\begin{align*}
\Vert\{v_j\Theta_j\pi_{W_j}f\}_{j\in\Bbb J}\Vert_2&\leq\Vert\{v_j\Theta_j\pi_{W_j}f-v_j\Lambda_j\pi_{W_j}f\}_{j\in\Bbb J}\Vert_2+\Vert\{v_j\Lambda_j\pi_{W_j}f\}_{j\in\Bbb J}\Vert_2\\
&\leq(\sqrt{R}+\sqrt{B})^2\Vert f\Vert^2
\end{align*}
and also
\begin{align*}
\Vert\{v_j\Theta_j\pi_{W_j}f\}_{j\in\Bbb J}\Vert_2&\geq\Vert\{v_j\Lambda_j\pi_{W_j}f\}_{j\in\Bbb J}\Vert_2-\Vert\{v_j\Theta_j\pi_{W_j}f-v_j\Lambda_j\pi_{W_j}f\}_{j\in\Bbb J}\Vert_2\\
&\geq(\sqrt{A}-\sqrt{R})^2\Vert f\Vert^2.
\end{align*}
Thus, these complete the proof.
\end{proof}

\section{Conclusions}
In this paper, we could transfer some common properties of g-frames to g-fusion frames. First, we reviewed gf-Riesz and orthonormal bases and showed that the weights have  basic roles in their definitions. Afterwards,  characterizations of these frames were presented in Section 4 by Sun's method by using the operator (\ref{s1}). Finally, in Section 5, we introduced the perturbation of g-fusion frames and reviewed some useful statements.

\end{document}